\documentclass[11pt]{amsart}

\usepackage {enumerate}
\usepackage{setspace}
\usepackage{caption}
\usepackage{mathrsfs}
\usepackage{hyperref}
\usepackage{esint}
\usepackage{amssymb}
\usepackage{graphicx}
\usepackage{color}
\usepackage[all]{xy}

\onehalfspacing

\newtheorem{theorem}{Theorem}[section]
\newtheorem{lemma}[theorem]{Lemma}
\newtheorem{corollary}[theorem]{Corollary}
\newtheorem{proposition}[theorem]{Proposition}

\numberwithin{equation}{section}

\theoremstyle {definition}
\newtheorem{definition}[theorem]{Definition}
\newtheorem{remark}[theorem]{Remark}

\usepackage[margin=1.1in]{geometry}

\DeclareMathOperator{\Ric}{Ric}
\begin{document}
\title[Rigidity of Area-Minimizing $2$-Spheres]{Rigidity of Area-Minimizing $2$-Spheres in $n$-manifolds with Positive Scalar Curvature}
\author{Jintian Zhu}
\address{Key Laboratory of Pure and Applied Mathematics, School of Mathematical Sciences, Peking University, Beijing, 100871, P.~R.~China}
\email{zhujt@pku.edu.cn}
\begin{abstract}
We prove that the least area of non-contractible immersed spheres is no more than $4\pi$ in any oriented compact manifold with dimension $n+2\leq 7$ which satisfies $R\geq 2$ and admits a map to $\mathbf S^2\times T^n$ with nonzero degree. We also prove a rigidity result for the equality case. This can be viewed as a generalization of the result in \cite{BBN2010} to higher dimensions.
\begin{flushleft}
\textbf{Keywords:}\quad Rigidity,\, Area-minimizing 2-spheres,\, Scalar Curvature
\end{flushleft}
\end{abstract}
\renewcommand{\subjclassname}{\textup{2000} Mathematics Subject Classification}
 \subjclass[2000]{Primary 53C24  ; Secondary 53C42}

\maketitle

\section{Introduction}
Throughout this paper, $(M^n,\bar g)$ will be an oriented closed Riemannian manifold with a nontrivial second homotopy group $\pi_2(M)$. We say that an immersed sphere $\Sigma$ in $M$ is non-contractilble, if it is not homotopic to a single point. Since $\pi_2(M)\neq \{0\}$, there exists at least one non-contractible immersed sphere. Denote
\begin{equation}
\mathcal F_M:=\left\{\Sigma\subset M\,:\,\Sigma\,\,\text{\rm is a non-contractible immersed sphere}\right\}
\end{equation}
and
\begin{equation}
\mathcal A_{\mathbf S^2}(M,\bar g):=\inf_{\Sigma\in\mathcal F_M} A_{\bar g}(\Sigma),
\end{equation}
where $A_{\bar g}(\Sigma)$ represents the area of $\Sigma$ with the induced metric from $\bar g$. We also use $R_{\bar g}$ to represent the scalar curvature of the metric $\bar g$ in the following context.


In the paper \cite{BBN2010}, H. Bray, S. Brendle and A. Neves proved the following result:
\begin{theorem}\label{BBN}
If $(M^3,\bar g)$ satisfies $R_{\bar g}\geq 2$, then we have
$
\mathcal A_{\mathbf S^2}(M,\bar g)\leq 4\pi.
$
Furthermore, the equality implies that the universal covering $(\hat M,\hat g)$ is $\mathbf S^2\times \mathbf R$.
\end{theorem}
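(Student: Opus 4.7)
The plan is to first produce an area-minimizing representative, then extract the inequality from a one-line stability computation, and finally upgrade the equality case to a local product structure that globalizes on the universal cover.

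For existence, because $\pi_2(M)\neq 0$ and $\dim M=3$, the Sacks--Uhlenbeck / Meeks--Simon--Yau machinery produces a (branched) conformal harmonic immersion $\Sigma\hookrightarrow M$ from $\mathbf S^2$ which is non-contractible and minimizes area in its class, in particular $A_{\bar g}(\Sigma)=\mathcal A_{\mathbf S^2}(M,\bar g)$. Such $\Sigma$ is minimal and stable. Plugging the test function $\varphi\equiv 1$ into the stability inequality gives
\[
0\le -\int_\Sigma\bigl(|A|^2+\Ric_{\bar g}(\nu,\nu)\bigr).
\]
The Gauss equation, together with $H=0$, rewrites the integrand as $\tfrac12(R_{\bar g}+|A|^2-R_\Sigma)$. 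Hence
\[
\int_\Sigma R_{\bar g}+\int_\Sigma|A|^2\le\int_\Sigma R_\Sigma=8\pi,
\]
the last equality by Gauss--Bonnet on $\mathbf S^2$. Using $R_{\bar g}\ge 2$ gives $2A_{\bar g}(\Sigma)\le 8\pi$, i.e.\ the desired bound $\mathcal A_{\mathbf S^2}\le 4\pi$.

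In the equality case $A_{\bar g}(\Sigma)=4\pi$, each inequality above is saturated pointwise: $|A|\equiv 0$ on $\Sigma$ (totally geodesic), $R_{\bar g}\equiv 2$ along $\Sigma$, and $R_\Sigma\equiv 2$, so that $(\Sigma,g|_\Sigma)$ is a round unit $2$-sphere. Combined with the Gauss equation, this forces $\Ric_{\bar g}(\nu,\nu)\equiv 0$ on $\Sigma$, so the Jacobi operator $L=-\Delta_\Sigma-(|A|^2+\Ric(\nu,\nu))$ becomes simply $-\Delta_\Sigma$; its first eigenvalue is $0$ with positive constant eigenfunction. This is the infinitesimal rigidity that unlocks the splitting.

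Next I would promote infinitesimal rigidity to a neighborhood splitting. Using the positive constant Jacobi field, the implicit function theorem produces a smooth local foliation $\{\Sigma_t\}_{|t|<\varepsilon}$ near $\Sigma_0=\Sigma$ by spheres of constant mean curvature $H(t)$, with $\Sigma_t$ written as a normal graph of a function close to a constant. Each $\Sigma_t$ is still non-contractible, so $A_{\bar g}(\Sigma_t)\ge 4\pi=A_{\bar g}(\Sigma_0)$; thus $t=0$ is a minimum of $t\mapsto A_{\bar g}(\Sigma_t)$. Rerunning the stability/Gauss--Bonnet computation above on each leaf, using $R_{\bar g}\ge 2$ globally, shows
\[
H(t)^2\,\mathrm{Area}(\Sigma_t)+\int_{\Sigma_t}(R_{\bar g}-2)+\int_{\Sigma_t}|A_t|^2\le 0,
\]
which forces $H(t)\equiv 0$, $|A_t|\equiv 0$, and $R_{\bar g}\equiv 2$ on every leaf. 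Hence all leaves are totally geodesic flat-normal spheres and, by the Riccati / Jacobi equation along the unit normal, the metric on $\bigcup_t\Sigma_t$ is a Riemannian product $\mathbf S^2\times(-\varepsilon,\varepsilon)$.

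Finally I would globalize by a continuity argument: let $I\subset\mathbf R$ be the maximal open interval on which such a product foliation extends from $\Sigma$ in the universal cover $(\hat M,\hat g)$. On the boundary of $I$ the leaves have area $4\pi$ and are uniformly totally geodesic with $\hat R=2$, so the local splitting argument can be reapplied at the boundary leaf, showing $I$ is both open and closed; compactness of $M$ (hence completeness of $\hat g$) then gives $I=\mathbf R$, and the isometry $(\hat M,\hat g)\cong\mathbf S^2\times\mathbf R$ follows. The main technical obstacle I anticipate is the step that passes from the one minimizer $\Sigma_0$ to the full CMC foliation and proves that each leaf remains non-contractible and area-minimizing up to $4\pi$; the rest is essentially bookkeeping once that foliation is in hand.
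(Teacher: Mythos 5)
Your outline mirrors the original Bray--Brendle--Neves proof: existence of a branched area-minimizing sphere, stability with constant test function plus Gauss equation plus Gauss--Bonnet for the inequality, infinitesimal rigidity forcing the Jacobi potential $|A|^2+\Ric(\nu,\nu)\equiv 0$, a CMC foliation, and a continuity/covering argument. The paper under review does not reprove this theorem (it is quoted from \cite{BBN2010}); its own contribution is the higher-dimensional generalization, whose rigidity step replaces the BBN foliation computation by Gromov's brane-functional ``soap-bubble'' argument. Up through the infinitesimal rigidity (including the slightly compressed ``saturated pointwise'' remark, which really amounts to the constant being a $0$-eigenfunction of the Jacobi operator, hence $|A|^2+\Ric(\nu,\nu)\equiv 0$ pointwise), your argument is sound.

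The genuine gap is in the foliation step. You assert that ``rerunning the stability/Gauss--Bonnet computation on each leaf'' yields
\[
H(t)^2\,\mathrm{Area}(\Sigma_t)+\int_{\Sigma_t}(R_{\bar g}-2)+\int_{\Sigma_t}|A_t|^2\le 0,
\]
but this inequality is derived from stability (i.e.\ nonnegativity of the Jacobi operator) with the constant test function, and the CMC leaves $\Sigma_t$ for $t\neq 0$ are not known to be stable minimal surfaces --- they are not even minimal, since you are precisely trying to show $H(t)\equiv 0$. Knowing $A(\Sigma_t)\geq 4\pi$ and that $\Sigma_t$ is CMC is not enough to produce this inequality: the sign of $\int_{\Sigma_t}\Ric(\nu_t,\nu_t)$ is uncontrolled without a stability input, so the Gauss--Bonnet identity alone leaves the wrong-sign terms on the table. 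The correct closure of this gap requires a different mechanism: either the BBN argument, which derives a first-order differential inequality for $H(t)$ (through the evolution equation for $H$, the positivity of the lapse, and the curvature bound) and combines it with $H(0)=0$ and a sign analysis of $A'(t)$ to force $H\equiv 0$; or the approach this paper uses in higher dimensions, namely assuming some $H(t_0)\neq 0$, minimizing a brane functional $\mathcal B=A-\delta V$ to produce a stable CMC ``soap bubble'' of mean curvature $\delta$, and then running the stability/Gauss--Bonnet computation on that stable surface to get a non-contractible sphere of area strictly less than $4\pi$, contradicting the equality case. Without one of these, the step from ``one area-minimizing leaf'' to ``every nearby leaf is totally geodesic'' is unproven; you flagged exactly this step as the main obstacle, and it is indeed where the proof as written fails.
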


It is natural to ask whether such result still holds for the dimension $n\geq 4$. Unfortunately, the result cannot be true in general. An easy example is as follows. Let $M=\mathbf S^2(r_1)\times \mathbf S^m(r_2)$ with $m\geq 3$ such that the scalar curvature of $M$ is $2$. With appropriate value for $r_2$ to be taken, the scalar curvature of $\mathbf S^n(r_2)$ can be arbitrarily close to $2$, from which it follows that the area of $\mathbf S^2(r_1)$ can be arbitrarily large. This yields that no upper bound depending only on the scalar curvature can be obtained for the least area of non-contractible spheres in this case.
Hopefully, if we impose further conditions on $(M^n,\bar g)$, such result can still be true. Namely, we have the following theorem:

\begin{theorem}\label{Thm: main1}
For $n+2\leq 7$, let $(M^{n+2},g)$ be an oriented closed Riemannian manifold with $R_g\geq 2$, which admits a non-zero degree map $f:M\to\mathbf S^2\times T^n$. Then $\mathcal A_{\mathbf S^2}(M,g)\leq 4\pi$. Furthermore, the equality implies that the universal covering $(\hat M,\hat g)$ is $\mathbf S^2\times \mathbf R^n$.
\end{theorem}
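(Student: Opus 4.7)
My plan is an iterated dimensional descent driven by the torus factor $T^n$ of the target, in the spirit of the Schoen--Yau torical method combined with Gromov's warped-product (or $\mu$-bubble) technique; I would reduce the problem in $n-1$ steps to a three-dimensional situation where Theorem \ref{BBN} applies directly. Throughout the descent one must simultaneously preserve (i) the topological input (a nonzero-degree map to $\mathbf S^2\times T^k$ for decreasing $k$) and (ii) the scalar curvature lower bound $R\geq 2$ in a suitable effective sense.

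To set up the descent, I would first observe that the nonzero degree of $f$ makes $f^*\omega_{\mathbf S^2}\smile f^*d\theta_1\smile\cdots\smile f^*d\theta_n$ pair nontrivially against $[M]$. In particular $f^*d\theta_n\in H^1(M;\mathbf Z)$ is Poincar\'e dual to a nontrivial class, and minimizing area in that class (with full regularity since $n+2\leq 7$) produces a smooth, closed, orientable, two-sided stable minimal hypersurface $\Sigma_1^{n+1}\subset M$ such that $f|_{\Sigma_1}$ composed with the projection $\mathbf S^2\times T^n\to\mathbf S^2\times T^{n-1}$ retains nonzero degree. The analytic heart will be the transfer of scalar curvature. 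Using the first eigenfunction $u_1>0$ of the Jacobi operator on $\Sigma_1$ (existing with $\lambda_1\geq 0$ by stability), a direct Gauss-equation computation yields
\begin{equation*}
R_{\Sigma_1} - 2u_1^{-1}\Delta_{\Sigma_1}u_1 \;=\; R_g + |A_{\Sigma_1}|^2 + 2\lambda_1 \;\geq\; 2,
\end{equation*}
equivalently, the warped metric $g_{\Sigma_1}+u_1^2\,d\theta^2$ on $\Sigma_1\times S^1$ has ordinary scalar curvature $\geq 2$. Iterating this minimize-and-warp construction (or, equivalently, working throughout with the associated weighted scalar curvature) while consuming one $S^1$ factor of the target at each step, one obtains after $n-1$ rounds a closed orientable three-manifold $N$ equipped with an effective metric satisfying $R\geq 2$ and a nonzero-degree map to $\mathbf S^2$; in particular $\pi_2(N)\neq 0$. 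Theorem \ref{BBN} then furnishes a non-contractible immersed sphere $S\subset N$ with $A(S)\leq 4\pi$, and tracing $S$ back through the tower (at each stage $S$ sits inside a fixed fibre of the relevant warped product, so its area is preserved) yields an immersed sphere in $M$ of area at most $4\pi$, non-contractible in $M$ because $f_*[S]$ represents a nonzero class in $\pi_2(\mathbf S^2\times T^n)\cong\mathbf Z$.

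For the rigidity, equality $\mathcal A_{\mathbf S^2}(M,g)=4\pi$ will force equality in every intermediate estimate of the descent: each $\Sigma_k$ must be totally geodesic in its predecessor, each Jacobi eigenfunction $u_k$ must be constant (so every warping is trivial), $\Ric_g(\nu_k,\nu_k)\equiv 0$ along each normal direction, and the BBN rigidity at the bottom gives $\widetilde N\cong\mathbf S^2\times\mathbf R$. The $n$ parallel unit normals $\nu_1,\ldots,\nu_n$ lift to parallel vector fields on $\hat M$, and de Rham's splitting theorem then yields $\hat M\cong\mathbf S^2\times\mathbf R^n$. The hardest part of the plan, I expect, is the technical execution of the weighted/warped-product descent so that the nonzero-degree condition and $R\geq 2$ both persist through every step, together with the parallel bookkeeping of equality cases needed to extract the full product splitting of the universal cover; a secondary difficulty will be verifying that the sphere produced at the bottom lifts coherently back to $M$ through the tower of warped products, since these intermediate spaces are not literally submanifolds of $M$.
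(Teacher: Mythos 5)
Your plan shares the torical--symmetrization skeleton with the paper (minimize a hypersurface in a homology class, take the first Jacobi eigenfunction, form the warped product, repeat), but it has two genuine gaps that the paper's proof is specifically engineered to avoid.

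First and most seriously, you stop the descent at a $3$-manifold $N$ and invoke Theorem~\ref{BBN}. But after $n-1$ rounds of minimize-and-warp, what you actually control is the scalar curvature of the \emph{warped product} $\bar g = g_N + \sum u_i^2\, dt_i^2$ on $N\times T^{n-1}$, i.e.\ a weighted inequality $R_{g_N} - 2\sum u_i^{-1}\Delta u_i - 2\sum_{i<j}\langle\nabla\log u_i,\nabla\log u_j\rangle \geq 2$. The induced metric $g_N$ on $N$ itself need not satisfy $R_{g_N}\geq 2$, so Theorem~\ref{BBN} does not apply to $(N,g_N)$. You cannot instead apply it to the ambient warped product $N\times T^{n-1}$, since that has dimension $n+2$ and an area-minimizing sphere there has high codimension; the introduction of the paper explains precisely why the second-variation-of-area argument of BBN does not survive this step (possible branch points, and insufficient control from codimension $\geq 2$). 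One could imagine proving a ``weighted BBN'' on $N$, but that is a new statement, and its proof is essentially the Gauss--Bonnet computation of Lemma~\ref{Lem: area-bound}. The paper resolves the issue by descending one more step to $\Sigma_2\approx \mathbf S^2$ and integrating the warped scalar curvature formula over $\mathbf S^2$: Gauss--Bonnet gives $2A_{g_2}(\mathbf S^2)\leq \int_{\mathbf S^2} R_{\bar g_2}\,d\mu \leq 8\pi$ directly, with no appeal to BBN at all.

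Second, your rigidity sketch jumps from infinitesimal rigidity ($\tilde A_k=0$, $\Ric(\tilde\nu_k,\tilde\nu_k)=0$, $u_k$ constant) to ``the $n$ parallel unit normals lift to parallel vector fields on $\hat M$''. Those normals are only defined along a single hypersurface $\tilde\Sigma_k$, not on a neighborhood, so there is no vector field yet to which de Rham's theorem can be applied; obtaining one is exactly the content that must be proved. The paper handles this in Lemma~\ref{Lem: foliation} and Proposition~\ref{Prop: rigidity slicing}: one constructs a CMC foliation $\{\tilde\Sigma_{k,t}\}$ via the implicit function theorem, and then must show $\tilde H_{k,t}\equiv 0$. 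This sign control is the hard step and is achieved by a $\mu$-bubble (brane functional) comparison: if some leaf had $\tilde H_{k,t_0}>0$, one minimizes $\mathcal B=A-\delta V$ to produce a stable bubble whose warped descent yields a non-contractible sphere of area $\leq 4\pi(1+\delta^2/2)^{-1}<4\pi$, contradicting $\mathcal A_{\mathbf S^2}(M,\bar g)=4\pi$. Only after that does one get the local product structure and compose the local isometries $\Phi_k:\Sigma_k\times\mathbf R\to\Sigma_{k+1}$ up to a covering $\mathbf S^2\times\mathbf R^n\to M$. Your proposal omits this foliation-and-bubble step entirely, which is where the real difficulty of the equality case lies.
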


\begin{remark}
The restriction $n+2\leq 7$ comes from the regularity issue of area-minimizing hypersurface.
\end{remark}

Our inspiration for Theorem \ref{Thm: main1} comes from M. Gromov's work in \cite{Gro2018}. He proved that
\begin{theorem}[\cite{Gro2018}, P. 653, and P. 679]
If $(V^n,g)$ is an over-torical band with $R_g\geq n(n-1)$ and $n\leq 8$, then
$$
width(V,g)\leq \frac{2\pi}{n}.
$$
When the equality holds, the universal covering of the extreme band is $\mathbf R^{n-1}\rtimes O(n-1)$-invariant.
\end{theorem}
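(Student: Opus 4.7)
The plan is to implement Gromov's $\mu$-bubble scheme: assuming $\mathrm{width}(V,g) > 2\pi/n$, one would extract an $(n-1)$-dimensional minimizing hypersurface that carries both positive scalar curvature and a non-zero-degree map to $T^{n-1}$, contradicting the Gromov--Lawson enlargeability obstruction.

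\textbf{Warping function and $\mu$-bubble.} I would first pick a smoothing $\rho:V\to\mathbf{R}$ of the signed distance to a central slice, arranged so that $|\nabla\rho|\leq 1-\delta$ for some $\delta>0$ (possible since $\mathrm{width}(V,g)>2\pi/n$), with $\rho<-\pi/n$ near $\partial_- V$ and $\rho>\pi/n$ near $\partial_+ V$. Next, I would introduce the warping function
\[ h(t) \,=\, -(n-1)\tan\!\left(\tfrac{n}{2}t\right), \qquad t\in(-\pi/n,\pi/n), \]
which blows up at $t=\pm\pi/n$ and satisfies the sharp Riccati identity $2h'(t) + \tfrac{n}{n-1}h(t)^2 + n(n-1) = 0$. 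The next step is to minimize the functional
\[ \mathcal{A}_h(\Omega) \,=\, \mathcal{H}^{n-1}(\partial^*\Omega) \,-\, \int_\Omega h(\rho)\,d\mathrm{vol}_g \]
over Caccioppoli sets separating the two ends of $V$. The barrier behavior of $h$ confines any minimizer $\Sigma$ to $\{|\rho|<\pi/n\}$, and the hypothesis $n\leq 8$ keeps $\Sigma$ smooth; the first variation gives $H_\Sigma = h(\rho)|_\Sigma$.

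\textbf{Positive scalar curvature and contradiction.} Combining the second variation inequality with the Gauss identity $R_\Sigma = R_g - 2\Ric_g(\nu,\nu) + H_\Sigma^2 - |A|^2$, the refined Kato bound $|A|^2\geq H_\Sigma^2/(n-1)$, the estimate $\langle\nabla\rho,\nu\rangle\leq |\nabla\rho|\leq 1-\delta$ (together with $h'<0$), the Riccati identity, and $R_g\geq n(n-1)$, I expect to obtain for every $\varphi\in C^1(\Sigma)$,
\[ \int_\Sigma \bigl(2|\nabla\varphi|^2 + R_\Sigma\,\varphi^2\bigr) \,\geq\, c_\delta \int_\Sigma \varphi^2, \]
with $c_\delta = \delta\,n(n-1) > 0$. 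Since $4(n-2)/(n-3)\geq 2$ for every $n\geq 4$, the conformal Laplacian of $\Sigma$ then has positive first eigenvalue, and a Yamabe-type conformal change produces on $\Sigma$ a metric of positive scalar curvature; for $n=3$ one replaces this step with a direct Gauss--Bonnet argument that forces $\chi(\Sigma)>0$. Restricting the degree-nonzero map $V\to T^{n-1}\times[-1,1]$ to $\Sigma$ and projecting to the torus factor then yields a non-zero-degree map $\Sigma\to T^{n-1}$, since $[\Sigma]$ realizes the slice class in $H_{n-1}(V,\partial V)$. But $T^{n-1}$ is enlargeable and $\dim\Sigma\leq 7$, so by Gromov--Lawson (or the Schoen--Yau descent) no such PSC metric on $\Sigma$ exists---contradiction.

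\textbf{Rigidity and main obstacle.} For the equality case $\mathrm{width}(V,g) = 2\pi/n$, one cannot arrange $\delta>0$; instead, I would work with limits of near-extremal $\mu$-bubbles constructed on mildly enlarged bands with shrinking slackness. Each inequality in the chain must saturate in the limit, forcing $|\nabla\rho|\equiv 1$ along extremal bubbles, $\Sigma$ umbilic with $A=(h/(n-1))g_\Sigma$, $R_g\equiv n(n-1)$, and the Ricci curvature $\Ric_g(\nabla\rho,\nabla\rho)$ attaining its extremal value. Foliating $V$ by such extremal bubbles and lifting to the universal cover should then identify $(\hat V,\hat g)$ with the warped product $dt^2 + \cos^{4/n}(nt/2)\,g_{\mathbf R^{n-1}}$ on $(-\pi/n,\pi/n)\times\mathbf R^{n-1}$, which is manifestly $\mathbf R^{n-1}\rtimes O(n-1)$-invariant. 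The main obstacle lies precisely in this rigidity step: the extremal $\mathcal{A}_h$ no longer has a strict minimum, so the limiting construction demands smooth compactness of the bubble family and a Bernstein-type foliation uniqueness theorem to extract the warped-product splitting.
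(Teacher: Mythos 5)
This theorem is quoted by the paper from \cite{Gro2018} without proof, so there is no internal argument to compare against; note that the technique the paper itself borrows from the same source for Theorem \ref{Thm: main1} is the \emph{torical symmetrization} (iterated minimal hypersurface plus warped product), whereas you propose the $\mu$-bubble route. That route is a legitimate and by now standard alternative proof of the band inequality, and your quantitative skeleton is correct: $h(t)=-(n-1)\tan(nt/2)$ does satisfy $2h'+\tfrac{n}{n-1}h^2+n(n-1)=0$; stability of the prescribed-mean-curvature bubble combined with the traced Gauss equation, $\|A\|^2\geq H^2/(n-1)$, and $h'\langle\nabla\rho,\nu\rangle\geq(1-\delta)h'$ yields exactly $\lambda_1(-2\Delta+R_\Sigma)\geq\delta n(n-1)$, and the conformal (resp.\ Gauss--Bonnet for $n=3$) descent to a positive scalar curvature metric on a closed manifold of dimension $n-1\leq 7$ dominating $T^{n-1}$ contradicts Schoen--Yau/Gromov--Lawson. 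Three small repairs: (i) the barrier setup should have $\rho\to\mp\pi/n$ only \emph{at} $\partial_\mp V$ (or $h$ defined on a slightly shrunken interval), since as written $h(\rho)$ is undefined where $\rho<-\pi/n$; (ii) $\|A\|^2\geq H^2/(n-1)$ is the Cauchy--Schwarz trace inequality, not a Kato inequality; (iii) for $n=8$ smoothness of the minimizing bubble is not automatic from Almgren--Simons theory and requires the generic perturbation argument of N.\ Smale, which is why Gromov's statement reaches $n\leq 8$ while the hypersurface $\Sigma$ itself has dimension $\leq 7$.

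The rigidity half, however, is a program rather than a proof. Your candidate extremal model $dt^2+\cos^{4/n}(nt/2)\,g_{\mathbf R^{n-1}}$ is the right one (it has $R\equiv n(n-1)$ and width $2\pi/n$, and its slices have mean curvature $h(t)$), but the passage from "every inequality saturates in a limit of near-extremal bubbles" to "the universal cover splits as this warped product" is precisely the hard step: one needs compactness and regularity of the limiting bubble, a foliation of a neighborhood by bubbles of the prescribed curvature (the analogue of Lemma \ref{Lem: foliation} here, via the implicit function theorem on the linearized operator), and an argument that each leaf is again extremal so that the lapse is constant and the metric splits. Without at least the foliation construction and the sign argument for the leaves' mean curvature (the counterpart of the brane-functional step in Proposition \ref{Prop: rigidity slicing}), the equality case remains unproved.
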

In the above theorem, an over-torical band means a Riemannian manifold $(V,g)$ with boundary $\partial V=\partial V_+\sqcup \partial V_-$, which admits a nonzero map $f:(V,\partial V_\pm)\to (T^{n-1}\times [-1,1],\, T^{n-1}\times\{\pm 1\})$.
The width of $(V,g)$ is defined to be the distance between $\partial V_+$ and $\partial V_-$. If one just takes $V$ to be the torical band $T^{n-1}\times [-1,1]$, then the theorem actually tells us that the lower positive bound of the scalar curvature gives an upper bound for the least distance transverse to $T^{n-1}$ slices.
From this point of view, our theorem deals with the area case in a similar spirit of M. Gromov's result.

We also recall that R. Schoen and S.T. Yau proved the following theorem:
\begin{theorem}[\cite{SY1979}, Corollary 2]
Let $M$ be a closed Riemannian manifold which admits a nonzero degree to $T^n$ with $n\leq 7$. Then the only possible metric with nonnegative scalar curvature is the flat one.
\end{theorem}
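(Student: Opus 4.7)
The plan is to implement the Schoen--Yau descent by iterated slicing with stable area-minimizing hypersurfaces, reducing to a Gauss--Bonnet base case, and then to propagate flatness back up by analyzing the equality cases.

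\emph{Base case} ($n=2$). A closed oriented surface admitting a nonzero-degree map to $T^2$ is forced to have $\chi \leq 0$ (any map from $S^2$ lifts to the universal cover $\mathbf{R}^2$ of $T^2$ and is therefore null-homotopic), so by Gauss--Bonnet $\int_M R_g = 4\pi\chi \leq 0$; combined with $R_g \geq 0$ this forces $R_g \equiv 0$, and the metric is flat.

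\emph{Inductive step.} Fix $3 \leq n \leq 7$ and assume the theorem in dimension $n-1$. Given $f: M^n \to T^n$ of nonzero degree with $R_g \geq 0$, I pull back the standard generators of $H^1(T^n;\mathbf{Z})$ to obtain classes $\alpha_1,\dots,\alpha_n \in H^1(M;\mathbf{Z})$ whose $n$-fold cup product is nonzero. I then minimize mass within the integral $(n-1)$-current Poincar\'e dual to $\alpha_1$; since $n \leq 7$, interior regularity yields a smooth, orientable, two-sided, embedded stable minimal hypersurface $\Sigma^{n-1} \subset M$. Because cup product corresponds to geometric intersection under Poincar\'e duality, the restrictions $\alpha_2|_\Sigma,\dots,\alpha_n|_\Sigma$ still have nonzero $(n-1)$-fold product, so (passing to a component if needed) $\Sigma$ carries a nonzero-degree map to $T^{n-1}$. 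Next, the stability inequality combined with the Gauss equation $R_g = R_\Sigma - 2\Ric_g(\nu,\nu) - |A|^2$ (using $H\equiv 0$) gives, for every $\varphi \in C^\infty(\Sigma)$,
\[
\int_\Sigma |\nabla\varphi|^2 \,\geq\, \tfrac{1}{2}\int_\Sigma \bigl(R_g - R_\Sigma + |A|^2\bigr)\varphi^2.
\]
When $n-1 \geq 3$ this forces the first eigenvalue of the conformal Laplacian on $\Sigma$ to be nonnegative, so a conformal metric $\tilde g = u^{4/(n-3)}g_\Sigma$ with $R_{\tilde g}\geq 0$ exists; when $n-1=2$ one takes $\varphi\equiv 1$ and concludes directly via Gauss--Bonnet. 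The inductive hypothesis applied to $(\Sigma,\tilde g)$ then forces $\tilde g$ to be flat.

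\emph{Rigidity propagation.} The main obstacle is upgrading flatness of the sliced hypersurface to flatness of the ambient $(M,g)$. Chasing the equality cases: $R_{\tilde g}\equiv 0$ forces the conformal factor $u$ to be constant, which saturates the stability inequality with $\varphi\equiv 1$; hence $|A|\equiv 0$, $R_g\equiv R_\Sigma$ on $\Sigma$, and $\Sigma$ is totally geodesic and flat with $R_g|_\Sigma = 0$. A Riccati/Jacobi-field analysis of the normal geodesic flow off $\Sigma$, using $R_g \geq 0$ globally together with the fact that nearby parallel leaves remain area-minimizing representatives of the same class, produces a local isometric product $\Sigma \times (-\varepsilon,\varepsilon)$ on which $R_g\equiv 0$. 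Finally, running the argument simultaneously with the remaining classes $\alpha_i$ (or equivalently passing to the universal cover and splitting iteratively through a standard open-closed connectedness argument) promotes the local splitting to a global flat structure, so $g$ is flat on all of $M$.
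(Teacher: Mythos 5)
First, note that the paper does not prove this statement at all: it is quoted verbatim as Corollary 2 of \cite{SY1979} and used as a black box (e.g.\ at the end of the proof of Proposition \ref{Prop: slicing} to rule out genus $\geq 1$). So your proposal can only be measured against the classical Schoen--Yau descent, which is indeed the strategy you adopt: slicing by stable area-minimizing hypersurfaces dual to the pulled-back $1$-classes, the stability/conformal-Laplacian comparison, induction down to the Gauss--Bonnet base case, and then an equality analysis. The non-existence half of your argument (through ``the inductive hypothesis forces $\tilde g$ to be flat'') is essentially correct modulo a sign typo in the Gauss equation (it should read $R_g=R_\Sigma+2\Ric_g(\nu,\nu)+|A|^2-H^2$; your displayed stability inequality is nevertheless the right one), and the equality-chasing that yields $u\equiv const$, $|A|\equiv 0$, $R_g|_\Sigma\equiv 0$ and $\Ric_g(\nu,\nu)\equiv 0$ is sound.

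The genuine gap is in the rigidity propagation. You assert that ``nearby parallel leaves remain area-minimizing representatives of the same class'' and feed this into a Riccati analysis. But the equidistant leaves of the normal exponential map are not automatically minimal, let alone area-minimizing: the infinitesimal rigidity only gives $\operatorname{tr}(R(\cdot,\nu)\nu)=\Ric(\nu,\nu)=0$ \emph{on} $\Sigma$, not that the normal curvature operator vanishes, and not anything off $\Sigma$; so the Riccati equation does not force the parallel leaves to stay totally geodesic, and homologous leaves a priori only have area $\geq A(\Sigma)$. This is precisely the hard point that the literature (and this very paper, in Lemma \ref{Lem: foliation} and Proposition \ref{Prop: rigidity slicing}) handles by a different mechanism: one constructs a foliation by \emph{constant mean curvature} leaves via the inverse function theorem, and then proves $H_t\equiv 0$ by a separate argument (a first-variation comparison as in \cite{BBN2010}, or the brane-functional/soap-bubble trick of \cite{Gro2018} used in Proposition \ref{Prop: rigidity slicing}). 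Without that step your local splitting is unproved. Be aware also that there is a cleaner standard route to the rigidity once non-existence of positive scalar curvature is known: $R_g\geq 0$ with $R_g\not\equiv 0$ can be conformally deformed to $R>0$ (contradiction), so $R_g\equiv 0$; then a perturbation $g-t\Ric_g$ plus a conformal correction gives $R>0$ unless $\Ric_g\equiv 0$; and Ricci-flatness together with $b_1(M)\geq n$ (from the nonzero cup product) and Bochner's formula produces $n$ parallel, pointwise independent $1$-forms, hence flatness. Either fix the foliation step or switch to this deformation argument; as written, the splitting claim is a hole.
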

Based on this, our theorem seems to have a similar spirit of this result as well.
If one writes $T^n$ as $T^2\times T^{n-2}$ and considers the above theorem as one for the torus case, then our theorem can be thought as a quantitive one for the sphere case.

We have to say that Theorem \ref{Thm: main1} is one of the few rigidity results in high dimensions. Actually, the relationship of the scalar curvature and minimal $2$-surfaces is far from clear when the dimension of the ambient manifold is greater than 4, although there have been various results in $3$-dimensional case. Apart from Theorem \ref{BBN}, we mention that there are similar rigidity results for area-minimizing projective planes and min-max spheres in three dimensional manifolds. We refer readers to \cite{BBEN2010} and \cite{MN2012} for further information. It is a very interesting topic to prove some counterparts of these results in higher dimensions.

In high dimension case, there are some new challenges in the proof of Theorem \ref{Thm: main1}. To show the upper bound for the least area of non-contractible immersed spheres, it is no longer valid to analyzing the second variation of area for an area-minimizing sphere in the class of non-contractible spheres.
The first trouble is that an area-minimizing sphere may fail to be immersed due to possible branched points, which however can be ruled out in dimension 3. While even if the area-minimizing sphere is immersed, we cannot obtain enough control on its area from the second variation formula due to higher codimensions.

To overcome these challenges, instead of taking an area-minimizing sphere, we construct directly a non-contractible embedded one with area no more than $4\pi$. To be precise, we apply the Torical Symmetrization argument from \cite{Gro2018} to find a slicing (see Definition \ref{Defn: slicing})
\begin{equation*}
(\Sigma_2,g_2)\looparrowright(\Sigma_3,g_3)\looparrowright\cdots\looparrowright (\Sigma_{n+1},g_{n+1}) \looparrowright (\Sigma_{n+2},g_{n+2}) = (M,\bar g),
\end{equation*}
where $\Sigma_2$ appears to be a non-contractible embedded sphere and $\Sigma_2\times T^n$ admits a warped product metric
$$
\bar g_2=g_2+\sum_{i=2}^{n+1}u_i^2\mathrm d t_{i+1}^2
$$
with $R_{\bar g_2}\geq 2$. In this case, we are able to show $A_{g_2}(\Sigma_2)\leq 4\pi$ from a straightforward calculation, which then gives $\mathcal A_{\mathbf S^2}(M,\bar g)\leq 4\pi$.
To present a more clear picture, let me briefly illustrate how to obtain a desired slicing from the Torical Symmetrization argument. The essential step is as follows. Suppose that we are given a Riemannian manifold $(\bar M,\bar g)$, which admits a nonzero degree map to $\mathbf S^2\times T^n$, then from geometric measure theory we can find an area-minimizing hypersurface $(\tilde M,\tilde g)$ in the pull-back homology class of $\mathbf S^2\times T^{n-1}$. Taking the first eigenfunction $\tilde u$ of the Jacobi operator $\tilde{\mathcal J}$, we can obtain a new manifold $(\bar M',\bar g')$ with $\bar M'=\tilde M\times\mathbf S^1$ and $\bar g'=\tilde u^2\mathrm dt^2+\tilde g$, which satisfies $R_{\bar g'}\geq R_{\bar g}$. The advantage of this construction is a lift in symmetry. That is, if $(M,\bar g)$ is $T^m$-symmetric, then $(\bar M',\bar g')$ is $T^{m+1}$-symmetric. Using the step stated above for several times and following the procedure as shown below, we can obtain the desired slicing.
\begin{figure}[htbp]
\begin{displaymath}
\xymatrix{
  (\bar\Sigma_{k+1},\bar g_{k+1}) \ar[rr]^{\quad\text{\rm Minimizing Procedure}}
                &  &    (\tilde\Sigma_k,\tilde g_k) \ar[dl]^{\,\,\,\text{\rm Warped Produt}}    \\
                & (\bar\Sigma_k,\bar g_k)   \ar[ul]^{\text{\rm Replace}}              }
\end{displaymath}
\caption{\small We start with $(\bar\Sigma_{n+2},\bar g_{n+2})=(M,\bar g)$ to find an area-minimizing surface $(\tilde\Sigma_{n+1},\tilde g_{n+1})$. From $(\tilde\Sigma_{n+1},\tilde g_{n+1})$ we constrct the warped product $(\bar\Sigma_{n+1},\bar g_{n+1})$ with $\bar\Sigma_{n+1}=\tilde\Sigma_{n+1}\times\mathbf S^1$, and then we use this as a new starting manifold to find another area-minimizing surface $(\tilde \Sigma_{n},\tilde g_n)$. By warped product, we obtain $(\bar\Sigma_n,\bar g_n)$. ... After repeating again and again, we obtain $\bar\Sigma_k$ for all $k\geq 2$. From higher and higher symmetry, we can write $\bar\Sigma_k=\Sigma_k\times T^{n+2-k}$, and these $\Sigma_k$ provide us the desired slicing.}
\end{figure}
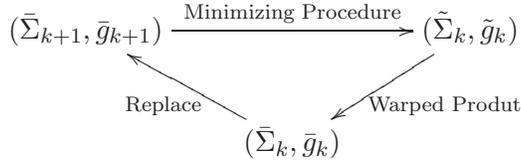

The proof for the rigidity in the equality case turns out to be very subtle. From $\mathcal A_{\mathbf S^2}(M,\bar g)=4\pi$, it is easy to show that $(\Sigma_2,g_2)$ is isometric to the standard sphere, therefore it is enough to show that there exist local isometries $\Phi_k:\Sigma_k\times\mathbf R\to\Sigma_{k+1}$ for $k$ from $2$ to $n+1$.
To obtain this result, we choose to prove a stronger one that there exists a local isometry $\tilde\Phi_k:\tilde\Sigma_k\times\mathbf R\to\bar\Sigma_{k+1}$. For simplicity, we will only explain our idea in the case $k=2$, since the rest cases follow from the same argument.
Through a calculation, it is direct to show $R_{\bar g_2}=R_{\tilde g_2}=2$ from $\mathcal A_{\mathbf S^2}(M,\bar g)=4\pi$.
From the construction, this implies that $\tilde\Sigma_2$ is totally geodesic and has vanished normal Ricci curvature in $\bar\Sigma_3$.
To show the existence of $\tilde\Phi_2$, we adopt the idea from \cite[Theorem 1.6]{CCE2016} to construct an area-minimizing foliation around $\tilde\Sigma_2$. From the inverse function theorem, it is not difficult to show the existence of a foliation $\{\tilde\Sigma_{2,t}\}_{-\epsilon\leq t\leq\epsilon}$ with constant mean curvature.
The difficulty here is to show that the mean curvatures $\tilde H_{2,t}$ have the correct sign such that each slice in the foliation is also area-minimizing.
At this stage, we follow the idea from \cite[page 679]{Gro2018} to rule out the worried situation. Assuming that one of the mean curvatures $\tilde H_{2,t}$ has a wrong sign, such as $\tilde H_{2,t_0}>0$ for $t_0>0$, by minimizing an appropriate brane functional in the from of $\mathcal B=A(\tilde\Sigma)-\delta V(\tilde\Omega)$, we can find a soap bubble $(\hat\Sigma_k,\hat g_k)$ in $\bar\Sigma_{k+1}$ such that $\hat\Sigma_k\times\mathbf S^1$ has a warped product metric
\begin{equation*}
\hat g_{k+1}=\hat g_k+\hat u_k^2\mathrm dt_{k+1}^2
\end{equation*}
with $R_{\hat g_{k+1}}\geq 2+\delta^2$. With the Torical Symmetrization argument applied to $\hat\Sigma_k\times\mathbf S^1$, there will be a non-contractible embedded sphere in $(M,\bar g)$ with area strictly less than $4\pi$, which leads to a contradiction. Consequently, each surface in the constructed foliation must be area-minimizing. Therefore they are totally geodesic and have vanished normal Ricci curvature as well. From this, it is quick to show that $\bar\Sigma_3$ splits as $\tilde\Sigma_2\times(-\epsilon,\epsilon)$ around $\tilde\Sigma_2$. Through a continuous argument, we can obtain the local isometry $\tilde\Phi_2$.

In the following, the article will be organized as follows. In section 2, we use the torical symmetrization argument to construct a non-contractible embedded sphere with area no more than $4\pi$. In section 3, we prove the rigidity for the case of equality.
\mbox{} \\

\noindent {\bf Acknowledgments.} This research is partially supported by the NSFC grants No. 11671015 and 11731001. The author also would like to thank Professor Yuguang Shi for many encouragements and discussions.
\section{Torical Symmetrization}
In this section, we show how to use Torical Symmetrization argument to construct an embedded non-contractible sphere with area no more than $4\pi$. For the convenience to state our results, we introduce the following definition first.
\begin{definition}\label{Defn: slicing}
A sequence of isometric immersions
\begin{equation*}
(\Sigma_0,g_0)\stackrel{\phi_1}\looparrowright (\Sigma_{1},g_{1})\stackrel{\phi_2}\looparrowright\cdots\stackrel{\phi_{m}}\looparrowright(\Sigma_m,g_m)
\end{equation*}
is called a slicing if each immersion $\phi_i$ is codimension one for $i=1,2,\ldots,m$.
\end{definition}
In the following, we denote the projection map from $\mathbf S^2\times T^i$ to $\mathbf S^2\times T^j$ by $\pi^i_j$ for any $i\geq j$. Using the Torical Symmetrization argument, we can obtain the following proposition:
\begin{proposition}\label{Prop: slicing}
For $n+2\leq 7$, let $(M^{n+2},\bar g)$ be an oriented closed Riemannian manifold with $R_{\bar g}\geq 2$, which admits a non-zero degree map $f:M\to\mathbf S^2\times T^n$. Then there exists a slicing
\begin{equation*}
(\Sigma_2\approx\mathbf S^2,g_2)\looparrowright(\Sigma_3,g_3)\looparrowright\cdots\looparrowright (\Sigma_{n+1},g_{n+1}) \looparrowright (\Sigma_{n+2},g_{n+2})=(M,\bar g),
\end{equation*}
such that the map $f_k=\pi^{n}_{k-2}\circ f|_{\Sigma_k}:\Sigma_k\to\mathbf S^2\times T^{k-2}$ has non-zero degree. Furthermore, there exist positive functions $u_k:\Sigma_k\to\mathbf R,\,k=2,3,\ldots,n+1,$ such that the manifolds $(\bar\Sigma_k,\bar g_k)$ with $\bar\Sigma_k=\Sigma_k\times T^{n+2-k}$ and
\begin{equation}
\bar g_k=g_k+\sum_{i=k}^{n+1}(u_i|_{\Sigma_k})^2\mathrm dt_{i+1}^2
\end{equation}
satisfies $R_{\bar g_k}\geq 2$, and that $(\tilde\Sigma_k,\tilde g_k)$ is area-minimizing\footnote{In this paper, area-minimizing means to have the least area in the homotopy class, although the hypersurface may have some much stronger area-minimizing property.} in $(\bar\Sigma_{k+1},\bar g_{k+1})$, where $\tilde\Sigma_k=\Sigma_k\times T^{n+1-k}$ and
\begin{equation*}
\tilde g_k=g_k+\sum_{i=k+1}^{n+1}(u_i|_{\Sigma_k})^2\mathrm dt_{i+1}^2.
\end{equation*}
\end{proposition}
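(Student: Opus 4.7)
The plan is to construct the slicing by downward induction on $k$, starting from $(\bar\Sigma_{n+2}, \bar g_{n+2}) = (M, \bar g)$ (no warping functions yet needed) and descending to $k = 2$. At stage $k+1$ I will have in hand an $(n+2)$-dimensional Riemannian manifold $(\bar\Sigma_{k+1}, \bar g_{k+1})$ with warped-product structure $\bar\Sigma_{k+1} = \Sigma_{k+1} \times T^{n+1-k}$, satisfying $R_{\bar g_{k+1}} \geq 2$, together with a non-zero degree map $f_{k+1} : \Sigma_{k+1} \to \mathbf S^2 \times T^{k-1}$. This extends to a non-zero degree map $\bar F_{k+1} := f_{k+1} \times \mathrm{id} : \bar\Sigma_{k+1} \to \mathbf S^2 \times T^n$, which is the object that drives the descent.

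For the inductive step I will pull back a nontrivial generator of $H^1(T^{k-1})$ (say $dt_{k+1}$) via $f_{k+1}$; its Poincar\'e dual, crossed with the fiber class $[T^{n+1-k}]$, gives a nonzero class $[\alpha] \in H_{n+1}(\bar\Sigma_{k+1})$. Since $n+2 \leq 7$, geometric measure theory produces a smooth area-minimizing hypersurface $\tilde\Sigma_k$ in $[\alpha]$. The crucial move, and the main technical obstacle, is to force $T^{n+1-k}$-invariance of $\tilde\Sigma_k$: the torus acts on $\bar\Sigma_{k+1}$ by isometries preserving the homology class, so by the standard orbit-averaging torical symmetrization of \cite{Gro2018} I may assume such invariance. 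Any $T^{n+1-k}$-invariant codimension-one hypersurface in the product $\Sigma_{k+1} \times T^{n+1-k}$ must split as $\Sigma_k \times T^{n+1-k}$ for some codimension-one $\Sigma_k \subset \Sigma_{k+1}$, since each torus orbit either lies entirely in the hypersurface or is disjoint from it. A direct cohomological calculation, using that $[\Sigma_k]$ is Poincar\'e dual to $f_{k+1}^*(dt_{k+1})$, shows that $f_k := \pi^n_{k-2} \circ f|_{\Sigma_k}$ inherits the same non-zero degree as $f_{k+1}$.

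Next I will take $\tilde u_k > 0$ to be the first eigenfunction, with eigenvalue $\lambda_1 \geq 0$ provided by stability, of the Jacobi operator $\tilde{\mathcal J}_k = -\Delta_{\tilde g_k} - |A|^2 - \Ric_{\bar g_{k+1}}(\nu,\nu)$. Because all the data is $T^{n+1-k}$-invariant and the first positive eigenfunction is unique up to scale, $\tilde u_k$ is itself invariant and descends to a positive function $u_k$ on $\Sigma_k$. Setting $\bar\Sigma_k := \tilde\Sigma_k \times \mathbf S^1 = \Sigma_k \times T^{n+2-k}$ and $\bar g_k := \tilde g_k + \tilde u_k^2 dt_{k+1}^2$ produces exactly the asserted warped-product metric. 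The scalar-curvature inequality then follows from the standard chain: combining the warped-product identity $R_{\bar g_k} = R_{\tilde g_k} - 2 \tilde u_k^{-1} \Delta_{\tilde g_k} \tilde u_k$, the eigenfunction equation $-\Delta_{\tilde g_k} \tilde u_k / \tilde u_k = |A|^2 + \Ric(\nu,\nu) + \lambda_1$, and the Gauss equation $R_{\tilde g_k} = R_{\bar g_{k+1}} - 2\Ric(\nu,\nu) - |A|^2$ gives
\begin{equation*}
R_{\bar g_k} \; = \; R_{\bar g_{k+1}} + |A|^2 + 2\lambda_1 \; \geq \; R_{\bar g_{k+1}} \; \geq \; 2,
\end{equation*}
closing the induction.

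At termination $k = 2$, $\Sigma_2$ is a closed oriented $2$-manifold admitting a non-zero degree map to $\mathbf S^2$, and $\bar\Sigma_2 = \Sigma_2 \times T^n$ carries a metric of $R \geq 2$. The final topological step is to rule out positive genus: any genus $g \geq 1$ surface admits a degree-one map to $T^2$, so $\Sigma_2 \times T^n$ would admit a degree-one map to $T^{n+2}$, contradicting the Schoen--Yau theorem cited in the introduction (which forces any $R \geq 0$ metric to be flat); hence $\Sigma_2 \approx \mathbf S^2$. The principal difficulties I anticipate are the torical symmetrization step and routine bookkeeping tracking which circle factor is peeled off at each stage; regularity of the minimizers is automatic in dimension $n+2 \leq 7$, and the scalar-curvature computation is by now standard in minimal hypersurface theory.
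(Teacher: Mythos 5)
Your overall architecture matches the paper: downward induction from $(M,\bar g)$, area-minimizers from geometric measure theory in the pulled-back $\mathbf S^2\times T^{k-2}\times T^{n+1-k}$ class, warping by the first Jacobi eigenfunction, the scalar curvature chain $R_{\bar g_k}=R_{\bar g_{k+1}}+\|\tilde A_k\|^2+2\tilde\lambda_{1,k}\geq 2$, and the Schoen--Yau endgame to force $\Sigma_2\approx\mathbf S^2$. But there is a genuine gap exactly at the step you label the ``main technical obstacle'' and then dispose of in one sentence: the claim that the area-minimizer $\tilde\Sigma_k$ may be taken $T^{n+1-k}$-invariant ``by the standard orbit-averaging torical symmetrization.'' There is no averaging procedure that turns an arbitrary area-minimizing hypersurface into an invariant one; translating $\tilde\Sigma_k$ around by the torus produces a whole family of distinct minimizers and nothing in general forces any of them, or any combination of them, to be invariant. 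The paper's actual argument is the thing that makes the whole construction work: since $\rho_{i,\theta}$ is a one-parameter family of isometries, the normal component $\phi$ of the induced variation field lies in the kernel of the Jacobi operator $\tilde{\mathcal J}_k$; by minimality $\tilde{\mathcal J}_k\geq 0$, so $\phi$ is a first eigenfunction and hence of fixed sign, which forces the algebraic intersection number of the orbit circle $\mathbf S^1_i$ with $\tilde\Sigma_k$ to be nonzero --- contradicting the fact that $\mathbf S^1_i$ lies \emph{inside} the homologous slice $S$, so that intersection number is zero. Therefore $\phi\equiv 0$ and the minimizer is invariant. Your proposal needs this argument (or an equivalent uniqueness statement) to be a proof; as written, the invariance is simply asserted.

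A smaller omission: after extracting $\tilde\Sigma_k=\Sigma_k\times T^{n+1-k}$, the base $\Sigma_k$ may be disconnected, and you must select a connected component on which $f_k$ still has nonzero degree and then invoke (as the paper does, citing Simon's book) that this component times the torus is still area-minimizing. Without this the induction does not cleanly produce a single $\Sigma_k$ at each level. Aside from these two points the rest of your computation and the genus-ruling-out argument are correct and essentially identical to the paper's.
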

\begin{proof}
We are going to construct the desired slicing with an induction argument. For convenience, let us denote $(\Sigma_{n+2},\bar g_{n+2},u_{n+2},f_{n+2})=(M,\bar g,1,f)$. Without loss of generality, we assume that $f$ is a smooth function. Now we state how to obtain $(\Sigma_{k},g_{k},u_k,f_{k})$ from $(\Sigma_{k+1},g_{k+1},u_{k+1},f_{k+1})$ for $k=n+1,n,\ldots,2$. Define the map
\begin{equation*}
F_{k+1}=(f_{k+1},id)\,:\,\Sigma_{k+1}\times T^{n+1-k}\to \mathbf S^2\times T^n=\mathbf S^2\times T^{k-1}\times T^{n+1-k}.
\end{equation*}
According to Sard's theorem, there exists a $\theta_{k-1}$ such that the preimage
\begin{equation*}
S=F_{k+1}^{-1}(\mathbf S^2\times T^{k-2}\times\{\theta_{k-1}\}\times T^{n+1-k})=f_{k+1}^{-1}(\mathbf S^2\times T^{k-2}\times\{\theta_{k-1}\})\times T^{n+1-k}
\end{equation*}
is a smooth $2$-sided embedded hypersurface.
It is easy to see that $\deg \pi^{n}_{n-1}\circ F_{k+1}|_S=\deg f_{k+1}\neq 0$, from which we know that $[S]$ represents a nontrivial homology class.
By geometric measure theory, there exists a smooth embedded, $2$-sided, area-minimizing hypersurface $(\tilde\Sigma_{k},\tilde g_k)$ in the homology class $[S]$ in Riemannian manifold $(\bar\Sigma_{k+1},\bar g_{k+1})$, where $\tilde g_k$ is the induced metric and
\begin{equation*}
\bar\Sigma_{k+1}=\Sigma_{k+1}\times T^{n+1-k},\quad\bar g_{k+1}=g_{k+1}+\sum_{i=k+1}^{n+1}(u_i|_{\Sigma_{k+1}})^2\mathrm dt_{i+1}^2.
\end{equation*}

We claim that $\tilde\Sigma_k$ is $T^{n+1-k}$-invariant. Notice that $(\bar\Sigma_{k+1},\bar g_{k+1})$ is $T^{n+1-k}$-invariant, taking isometric $\mathbf S^1$-action $\rho_{i,\theta}$, $i=k+2, k+3,\ldots,n+2$, such that
\begin{equation*}
\rho_{i,\theta}\left((x,t_{k+2},\ldots,t_i,\ldots,t_{n+2})\right) =(x,t_{k+2},\ldots,t_i+\theta,\ldots,t_{n+2}),\quad x\in \Sigma_{k+1},
\end{equation*}
then $\rho_{i,\theta}$ induces a smooth variation vector field $X=\phi\nu$ on $\tilde\Sigma_k$, where $\nu$ is a fixed unit normal vector field.
If $\phi$ is not identical to zero, from the fact that $\tilde\Sigma_k$ is area-minimizing and $\rho_{i,\theta}$ is isometry, we see that $\phi$ is the first eigenfunction of the Jacobi operator $\tilde{\mathcal J}_k$, which implies that $\phi$ has a fixed sign.
This yields that the algebraic intersection number of the circle $\mathbf S^1_i$ and $\tilde\Sigma_k$ is nonzero.
However, notice that $\mathbf S^1_i$ lies on the hypersurface $S$, the algebraic intersection of $\mathbf S_i^1$ and $S$ is zero, which leads to a contradiction. Therefore, we have $\phi\equiv 0$ and that $\tilde\Sigma_k$ is $\rho_{i,\theta}$-invariant.
Ranging $i$ from $k+2$ to $n+2$, we know that $\tilde\Sigma_k$ is $T^{n+1-k}$-invariant, and it follows that $\tilde\Sigma_k=\Sigma_k\times T^{n+1-k}$.
Denote $f_k=\pi^{n}_{k-2}\circ f|_{\Sigma_k}=\pi^{k-1}_{k-2}\circ f_{k+1}|_{\Sigma_k}$.
Notice that $\tilde\Sigma_k$ is homologic to $S$, we have
\begin{equation*}
\deg f_k=\deg \pi^n_{n-1}\circ F_{k+1}|_{\tilde\Sigma_k}=\deg \pi^n_{n-1}\circ F_{k+1}|_S=\deg F_{k+1}=\deg f_{k+1}\neq 0.
\end{equation*}
It is possible that $\Sigma_k$ have several connected components, but we can choose one of these components, still denoted by $\Sigma_k$, satisfying $\deg f_k\neq 0$. In this case, it follows from \cite[Lemma 33.4]{S1983} that $\tilde\Sigma_k=\Sigma_k\times T^{n+1-k}$ is still area-minimizing. Let $g_k$ be the induced metric of $\Sigma_k$ from $(\Sigma_{k+1},g_{k+1})$, then
\begin{equation*}
\tilde g_k=g_k+\sum_{i=k+1}^{n+1} (u_i|_{\Sigma_k})^2\mathrm dt_{i+1}^2,
\end{equation*}
and the last statement in the proposition follows easily.

In the following, we construct the desired function $u_k$. Since $\tilde\Sigma_k$ is area-minimizing, the Jacobi operator
\begin{equation}\label{Eq: Jacobi-Operator}
\tilde{\mathcal J}_k=-\Delta_{\tilde g_k}-(\Ric_{\bar g_{k+1}}(\tilde\nu_k,\tilde \nu_k)+\|\tilde A_k\|^2)=-\Delta_{\tilde g_k}-\frac{1}{2}\left(R_{\bar g_{k+1}}|_{\tilde\Sigma_k}- R_{\tilde g_k}+\|\tilde A_k\|^2\right)
\end{equation}
is nonnegative, where $\tilde\nu_k$ and $\tilde A_k$ are the unit normal vector field and the corresponding second fundamental form. Denote $\tilde u_k$ to be the first eigenfunction of $\tilde{\mathcal J}_k$ with respect to the first eigenvalue $\tilde \lambda_{1,k}\geq 0$. Since the first eigenspace is dimension one, $\tilde u_k$ is also $T^{n+1-k}$-invariant, and hence $\tilde u_k$ can be viewed as a function $u_k$ over $\Sigma_k$. Given the smooth metric
\begin{equation*}
\bar g_k=g_k+\sum_{i=k}^{n+1}(u_i|_{\Sigma_k})^2\mathrm dt_{i+1}^2
\end{equation*}
on $\bar\Sigma_k$, direct calculation shows
\begin{equation}\label{Eq: Monotonicity Scalar Curvature}
R_{\bar g_k}= R_{\tilde g_k}-2\frac{\Delta_{\tilde g_k}\tilde u_k}{\tilde u_k}=R_{\bar g_{k+1}}|_{\tilde\Sigma_k}+\|\tilde A_k\|^2+2\tilde\lambda_{1,k}\geq 2.
\end{equation}

Through an induction argument from $k=n+1$ to $k=2$, we can obtain the desired slicing
\begin{equation*}
(\Sigma_2,g_2)\looparrowright(\Sigma_3,g_3)\looparrowright\cdots\looparrowright (\Sigma_{n+1},g_{n+1}) \looparrowright (\Sigma_{n+2},g_{n+2})=(M,\bar g).
\end{equation*}
The only thing needs to be verified is that $\Sigma_2$ is a $2$-sphere. Otherwise, $\Sigma_2$ is a surface with genus $\mathfrak g\geq 1$, and then there exists a map $\tilde f:\Sigma_2\times T^n\to T^{n+2}$ with nonzero degree. Combined with  \cite[Corollary 2]{SY1979}, this contradicts to the fact $R_{\bar g_2}\geq 2$.
\end{proof}

The following lemma yields an upper bound for the area of $\Sigma_2$.
\begin{lemma}\label{Lem: area-bound}
Let $(\mathbf S^2\times T^n,\bar g)$ be a Riemannian manifold with $R_{\bar g}\geq 2$ and
\begin{equation}
\bar g=g+\sum_{i=2}^{n+1} u_i^2\mathrm d t_{i+1}^2,
\end{equation}
where $g$ is a smooth metric on $\mathbf S^2$ and $u_i:\mathbf S^2\to\mathbf R$ are positive smooth functions for $i=2,3,\ldots,n+1$.
Then we have
$$
A_g(\mathbf S^2)\leq 4\pi,
$$
where equality holds if and only if $(\mathbf S^2,g)$ is isometric to the standard sphere and $u_i$ are positive constants for $i=2,3,\ldots,n+1$.
\end{lemma}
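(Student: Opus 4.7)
My strategy is to compute the scalar curvature of the multiply-warped product metric $\bar g$ as an identity that expresses $R_g$ as $R_{\bar g}$ plus a divergence term plus manifestly non-negative quantities, integrate over $\mathbf{S}^2$ so that the divergence drops out, and combine the result with Gauss--Bonnet and the hypothesis $R_{\bar g}\ge 2$ to read off the area bound.

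First I would iterate the standard singly-warped product identity $R_{\tilde g+u^2\,\mathrm dt^2}=R_{\tilde g}-2\Delta_{\tilde g}u/u$ over the $n$ circle directions. Because each $u_i$ descends from $\mathbf{S}^2$ and is therefore independent of the torus coordinates, successive base Laplacians pick up cross-terms of the form $\langle\nabla u_i,\nabla u_j\rangle/u_j$, producing the standard multiply-warped formula
\begin{equation*}
R_{\bar g}\;=\;R_g-2\sum_{i=2}^{n+1}\frac{\Delta_g u_i}{u_i}-2\sum_{2\le i<j\le n+1}\frac{\langle\nabla u_i,\nabla u_j\rangle_g}{u_i u_j}.
\end{equation*}
Setting $L:=\sum_{i=2}^{n+1}\log u_i$ and using $\Delta_g u_i/u_i=\Delta_g\log u_i+|\nabla\log u_i|^2$, a direct rearrangement recasts this as
\begin{equation*}
R_g-R_{\bar g}\;=\;2\Delta_g L+|\nabla L|^2+\sum_{i=2}^{n+1}|\nabla\log u_i|^2,
\end{equation*}
which is the form I want. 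The reason for keeping the last sum separate, rather than folding everything into $|\nabla L|^2$, is to arrange the rigidity statement to pin down each individual $u_i$ and not merely their product.

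The conclusion then follows by integration: on the closed surface $\mathbf{S}^2$ the divergence $2\Delta_g L$ integrates to zero, Gauss--Bonnet gives $\int_{\mathbf{S}^2}R_g\,d\mu_g=8\pi$, and the hypothesis $R_{\bar g}\ge 2$ yields
\begin{equation*}
8\pi\;\ge\;2A_g(\mathbf{S}^2)+\int_{\mathbf{S}^2}|\nabla L|^2\,d\mu_g+\sum_{i=2}^{n+1}\int_{\mathbf{S}^2}|\nabla\log u_i|^2\,d\mu_g,
\end{equation*}
which delivers $A_g(\mathbf{S}^2)\le 4\pi$. For the equality case every non-negative term on the right must vanish, so each $u_i$ is constant and $R_{\bar g}\equiv 2$; with the $u_i$ constant the base scalar curvature coincides with $R_{\bar g}$, so $(\mathbf{S}^2,g)$ has constant Gauss curvature $1$ and is isometric to the standard sphere. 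I do not anticipate a serious obstacle; the only delicate point is the algebraic rearrangement in the second step, which must produce a separate $|\nabla\log u_i|^2$ for each $i$ in order for the equality analysis to work.
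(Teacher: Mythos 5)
Your proposal is correct and matches the paper's proof essentially line by line: the same multiply-warped scalar curvature formula, the same algebraic rearrangement using $u_i^{-1}\Delta_g u_i=\Delta_g\log u_i+|\nabla_g\log u_i|^2$ to isolate the non-negative terms $|\nabla L|^2$ and $\sum_i|\nabla\log u_i|^2$, and the same integration plus Gauss--Bonnet step, with the equality analysis handled identically.
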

\begin{proof}
Through direct calculation (refer to \cite[Lemma 2.5]{SY2017}), we have
\begin{equation*}
R_{\bar g}=R_g-2\sum_{i=2}^{n+1} u_i^{-1}\Delta_g u_i-2\sum_{2\leq i<j\leq n+1}\langle \nabla_g\log u_i,\nabla_g\log u_j\rangle.
\end{equation*}
Integrating over $\mathbf S^2$, we obtain
\begin{equation}\label{Eq: area-bound}
2A_g(\mathbf S^2)\leq\int_{\mathbf S^2}R_{\bar g}\,\mathrm d\mu_g=8\pi-\sum_{i=2}^{n+1}\int_{\mathbf S^2}|\nabla_g\log u_i|^2\,\mathrm d\mu_g-\int_{\mathbf S^2}\left|\sum_{i=2}^{n+1}\nabla_g\log u_i\right|^2\,\mathrm d\mu_g,
\end{equation}
where we have used the fact
\begin{equation*}
u_i^{-1}\Delta_gu_i=\Delta_g\log u_i+|\nabla_g\log u_i|^2,\quad i=2,3,\ldots,n+1.
\end{equation*}
Then it is clear that $A_g(\mathbf S^2)\leq 4\pi$. When the equality holds, we see that $u_i$ are positive constants for $i=2,3,\ldots,n+1$, and then $R_g=R_{\bar g}\geq 2$. From the Gauss-Bonnet formula, we obtain $R_g\equiv 2$, which implies that $(\mathbf S^2,g)$ is isometric to the standard sphere.
\end{proof}

\begin{corollary}\label{Cor: area-bound}
Let $(M,\bar g)$ be as in Theorem \ref{Thm: main1}. Then we have $\mathcal A_{\mathbf S^2}(M,\bar g)\leq 4\pi$.
\end{corollary}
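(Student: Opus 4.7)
The plan is to combine Proposition \ref{Prop: slicing} with Lemma \ref{Lem: area-bound}, so the corollary really amounts to a packaging step. First I would invoke Proposition \ref{Prop: slicing} on $(M,\bar g)$ to produce a slicing whose bottom stratum $(\Sigma_2,g_2)$ is diffeomorphic to $\mathbf S^2$, together with positive functions $u_i|_{\Sigma_2}$, $i=2,\dots,n+1$, such that the warped product
$$
\bar g_2 = g_2 + \sum_{i=2}^{n+1} (u_i|_{\Sigma_2})^2\,\mathrm d t_{i+1}^2
$$
on $\bar\Sigma_2 = \Sigma_2\times T^n$ satisfies $R_{\bar g_2}\geq 2$.

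Next I would observe that $(\bar\Sigma_2,\bar g_2)$ matches the hypotheses of Lemma \ref{Lem: area-bound} verbatim, so the lemma immediately gives $A_{g_2}(\Sigma_2)\leq 4\pi$.

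The last point I would verify is that the embedded sphere $\Sigma_2$, viewed in $M$ via the slicing immersions $\Sigma_2\looparrowright\Sigma_3\looparrowright\cdots\looparrowright M$, is non-contractible, so that it genuinely competes for the infimum defining $\mathcal A_{\mathbf S^2}(M,\bar g)$. This is forced by the non-vanishing degree statement in Proposition \ref{Prop: slicing}: the map $f_2=\pi^{n}_{0}\circ f|_{\Sigma_2}:\Sigma_2\to\mathbf S^2$ has non-zero degree, hence is not nullhomotopic, and if $\Sigma_2$ were contractible in $M$ then $f_2$ would be nullhomotopic too, a contradiction. Combining these three points,
$$
\mathcal A_{\mathbf S^2}(M,\bar g)\leq A_{g_2}(\Sigma_2)\leq 4\pi.
$$

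There is no real obstacle at this stage since all the hard work has been absorbed into Proposition \ref{Prop: slicing} (the torical symmetrization, the $T^{n+1-k}$-invariance of each area-minimizer, the warped product construction preserving $R\geq 2$, and the ruling out of higher genus) and into Lemma \ref{Lem: area-bound} (the integrated scalar curvature inequality on $\mathbf S^2$). The only subtlety worth double-checking is the non-contractibility argument in the last paragraph, but it reduces to the degree computation already recorded in the proposition.
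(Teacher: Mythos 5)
Your proof is correct and follows the same route as the paper: invoke Proposition \ref{Prop: slicing} for the sphere slice and the warped-product scalar curvature bound, apply Lemma \ref{Lem: area-bound} to get $A_{g_2}(\Sigma_2)\leq 4\pi$, and use the non-zero degree of $\pi^n_0\circ f|_{\Sigma_2}$ to see $\Sigma_2$ is non-contractible. The only cosmetic difference is that you spell out the homotopy argument for non-contractibility, which the paper states without elaboration.
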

\begin{proof}
From Proposition \ref{Prop: slicing}, we can find an embedded sphere $\Sigma_2$ such that $\pi^n_0\circ f|_{\Sigma_2}:\Sigma_2\to\mathbf S^2$ has nonzero degree. This implies that $\Sigma_2$ is non-contractible. By Lemma \ref{Lem: area-bound}, we know $A_{\bar g}(\Sigma_2)\leq 4\pi$. Therefore, $\mathcal A_{\mathbf S^2}(M,\bar g)\leq A_{\bar g}(\Sigma_2)\leq 4\pi$.
\end{proof}

\section{The Case of Equality}
In this section, $(M^{n+2},\bar g)$ is always an oriented closed Riemannian manifold with dimension $n+2\leq 7$ and scalar curvature $R_{\bar g}\geq 2$, which admits a non-zero degree map $f:M\to\mathbf S^2\times T^n$. Also, we use
\begin{equation*}
(\Sigma_2,g_2)\looparrowright(\Sigma_3,g_3)\looparrowright\cdots\looparrowright (\Sigma_{n+1},g_{n+1}) \looparrowright (\Sigma_{n+2},g_{n+2})=(M,\bar g)
\end{equation*}
to denote an arbitrary slicing constructed as in Proposition \ref{Prop: slicing}, and we adapt the same notation there.

The first lemma below is the starting point for our rigidity result.
\begin{lemma}\label{Lem: tilde Sigma 2}
If $\mathcal A_{\mathbf S^2}(M,\bar g)=4\pi$, then $(\Sigma_2,g_2)$ is isometric to the standard sphere, $R_{\bar g_2}=2$ and $u_i|_{\Sigma_2}$ are positive constants for $i=2,3,\ldots,n+1$.
\end{lemma}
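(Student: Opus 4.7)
The proof should be essentially a direct consequence of combining Proposition \ref{Prop: slicing}, Corollary \ref{Cor: area-bound}, and the equality case of Lemma \ref{Lem: area-bound}. The plan is as follows.

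First, I would observe that $\Sigma_2$, as provided by Proposition \ref{Prop: slicing}, is an embedded $2$-sphere in $M$ (via the composed immersion $\Sigma_2 \looparrowright \Sigma_3 \looparrowright \cdots \looparrowright \Sigma_{n+2}=M$) whose induced metric is exactly $g_2$, so $A_{\bar g}(\Sigma_2) = A_{g_2}(\Sigma_2)$. The fact that $\pi^n_0 \circ f|_{\Sigma_2}:\Sigma_2 \to \mathbf S^2$ has non-zero degree (from Proposition \ref{Prop: slicing}) forces $\Sigma_2$ to be non-contractible in $M$, since a contractible sphere would give a null-homotopic composition with $\pi^n_0\circ f$. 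Therefore $\Sigma_2 \in \mathcal F_M$, which yields the lower bound
\begin{equation*}
\mathcal A_{\mathbf S^2}(M,\bar g) \leq A_{g_2}(\Sigma_2).
\end{equation*}

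Next, I would apply Lemma \ref{Lem: area-bound} to the warped product $(\bar\Sigma_2, \bar g_2) = (\Sigma_2 \times T^n,\, g_2 + \sum_{i=2}^{n+1}(u_i|_{\Sigma_2})^2\,\mathrm dt_{i+1}^2)$, which has $R_{\bar g_2}\geq 2$ by Proposition \ref{Prop: slicing} and whose underlying $\mathbf S^2$-factor is $(\Sigma_2, g_2)$. This gives the upper bound $A_{g_2}(\Sigma_2) \leq 4\pi$. Combining with the hypothesis $\mathcal A_{\mathbf S^2}(M,\bar g)=4\pi$ and the lower bound from the previous step, all inequalities become equalities and $A_{g_2}(\Sigma_2) = 4\pi$.

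At this point the equality clause of Lemma \ref{Lem: area-bound} applies and delivers two of the three claims: $(\Sigma_2,g_2)$ is isometric to the standard round sphere, and $u_i|_{\Sigma_2}$ is a positive constant for $i=2,3,\ldots,n+1$. For the remaining assertion $R_{\bar g_2}=2$, I would simply note that since the warping functions $u_i|_{\Sigma_2}$ are constants, the standard formula
\begin{equation*}
R_{\bar g_2} = R_{g_2} - 2\sum_{i=2}^{n+1} u_i^{-1}\Delta_{g_2} u_i - 2\sum_{2\leq i<j\leq n+1}\langle \nabla_{g_2}\log u_i,\nabla_{g_2}\log u_j\rangle
\end{equation*}
reduces pointwise to $R_{\bar g_2}=R_{g_2}$, and $R_{g_2}\equiv 2$ since $g_2$ is the round metric on $\mathbf S^2$.

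There is no real obstacle: this lemma is the bookkeeping step that packages the equality case into a clean statement to be used as input to the deeper splitting argument in subsequent lemmas. The substantive work lies ahead, where one must upgrade this pointwise rigidity along the bottom slice $\Sigma_2$ to a genuine splitting of the universal cover.
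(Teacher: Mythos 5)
Your proposal is correct and follows essentially the same route as the paper: apply the upper bound $A_{g_2}(\Sigma_2)\leq 4\pi$ from Lemma \ref{Lem: area-bound}, combine with the lower bound $\mathcal A_{\mathbf S^2}(M,\bar g)\leq A_{g_2}(\Sigma_2)$ (already established in the proof of Corollary \ref{Cor: area-bound}) to force $A_{g_2}(\Sigma_2)=4\pi$, and then invoke the equality case of Lemma \ref{Lem: area-bound} plus the vanishing of the gradient terms to obtain $R_{\bar g_2}=R_{g_2}=2$. You merely spell out the inequality chain and the degree argument for non-contractibility more explicitly than the paper's terse version.
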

\begin{proof}
It follows from Proposition \ref{Prop: slicing} that $(\bar\Sigma_2,\bar g_2)$ satisfies $R_{\bar g_2}\geq 2$, where
$$
\bar\Sigma_2=\Sigma_2\times T^n\quad \text{\rm and}\quad \bar g_2=g_2+\sum_{i=2}^{n+1} (u_i|_{\Sigma_2})^2\mathrm dt_{i+1}^2.
$$
If $\mathcal A_{\mathbf S^2}(M,\bar g)=4\pi$, from Lemma \ref{Lem: area-bound}, we have $A_{g_2}(\Sigma_2)=4\pi$. As a result, $(\Sigma_2,g_2)$ is isometric to the standard sphere, and $u_i|_{\Sigma_2}$ are positive constants. Therefore, we have $R_{\bar g_2}=2$.
\end{proof}
In the following, we prove the infinitesimal rigidity for $\tilde \Sigma_k$.
\begin{lemma}\label{Lem: tilde Sigma k}
Fix some $2\leq k\leq n+1$, if $R_{\bar g_k}\equiv 2$ and $u_i|_{\Sigma_k}$ are positive constants for $i=k,k+1,\ldots,n+1$, then $\tilde\Sigma_k$ is totally geodesic in $\bar\Sigma_{k+1}$ and satisfies $\Ric_{\bar g_{k+1}}(\tilde \nu_k,\tilde \nu_k)=0$.
\end{lemma}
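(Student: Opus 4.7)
The plan is to read everything off equation (2.3) combined with the Jacobi equation (2.2). Equation (2.3) from Proposition \ref{Prop: slicing} gives the identity
$$
R_{\bar g_k} = R_{\bar g_{k+1}}\bigl|_{\tilde \Sigma_k} + \|\tilde A_k\|^2 + 2\tilde\lambda_{1,k},
$$
and from the construction we have $R_{\bar g_{k+1}} \geq 2$, $\|\tilde A_k\|^2 \geq 0$, and $\tilde \lambda_{1,k} \geq 0$. The hypothesis $R_{\bar g_k}\equiv 2$ therefore forces all three nonnegative contributions on the right-hand side to vanish pointwise on $\tilde\Sigma_k$. In particular $\|\tilde A_k\|^2\equiv 0$, which is exactly the statement that $\tilde\Sigma_k$ is totally geodesic in $\bar\Sigma_{k+1}$.

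Next I would extract the Ricci vanishing from the first-eigenfunction equation. Since $\tilde\lambda_{1,k}=0$, the positive function $\tilde u_k$ satisfies
$$
-\Delta_{\tilde g_k}\tilde u_k - \bigl(\Ric_{\bar g_{k+1}}(\tilde\nu_k,\tilde\nu_k) + \|\tilde A_k\|^2\bigr)\tilde u_k = 0,
$$
and with $\|\tilde A_k\|^2\equiv 0$ this simplifies to $\Delta_{\tilde g_k}\tilde u_k = -\Ric_{\bar g_{k+1}}(\tilde\nu_k,\tilde\nu_k)\,\tilde u_k$. By construction $\tilde u_k$ is the $T^{n+1-k}$-invariant lift of the function $u_k$ on $\Sigma_k$, and the hypothesis says $u_k|_{\Sigma_k}$ is a positive constant. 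Hence $\tilde u_k$ is a positive constant on $\tilde\Sigma_k$, so $\Delta_{\tilde g_k}\tilde u_k\equiv 0$. Dividing by the nonzero constant $\tilde u_k$ yields $\Ric_{\bar g_{k+1}}(\tilde\nu_k,\tilde\nu_k)\equiv 0$, completing the proof.

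There is really no serious obstacle in this lemma; the entire content is organizing the two identities (2.2) and (2.3) so that the rigidity cases of three separate nonnegative inequalities line up. The only place one has to be a bit careful is noting that the hypothesis $u_k|_{\Sigma_k}=\mathrm{const}$ together with the $T^{n+1-k}$-invariance of $\tilde u_k$ produces a \emph{constant} eigenfunction, which is what kills the Laplacian term and lets one conclude the Ricci vanishing rather than just a relation between $\Ric_{\bar g_{k+1}}(\tilde\nu_k,\tilde\nu_k)$ and $\Delta_{\tilde g_k}\tilde u_k/\tilde u_k$.
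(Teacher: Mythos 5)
Your proof is correct and follows essentially the same route as the paper: read off $\tilde A_k\equiv 0$, $R_{\bar g_{k+1}}|_{\tilde\Sigma_k}\equiv 2$, and $\tilde\lambda_{1,k}=0$ from the identity in (\ref{Eq: Monotonicity Scalar Curvature}) together with $R_{\bar g_k}\equiv 2$, then use constancy of $u_k$ to kill the Laplacian term. The only cosmetic difference is in the final step: you extract $\Ric_{\bar g_{k+1}}(\tilde\nu_k,\tilde\nu_k)=0$ directly from the eigenfunction equation $\tilde{\mathcal J}_k\tilde u_k=0$, whereas the paper first deduces $R_{\tilde g_k}\equiv 2$ and then uses the rewriting of the Jacobi potential in (\ref{Eq: Jacobi-Operator}); the two are equivalent via the minimal-hypersurface Gauss equation.
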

\begin{proof}
Recall from (\ref{Eq: Monotonicity Scalar Curvature}) that
\begin{equation*}
R_{\bar g_k}\geq R_{\bar g_{k+1}}|_{\tilde\Sigma_k}+\|\tilde A_k\|^2+2\tilde\lambda_{1,k},
\end{equation*}
combined with the facts $R_{\bar g_k}=2$, $R_{\bar g_{k+1}}\geq 2$ and $\tilde\lambda_{1,k}\geq 0$, we obtain $R_{\bar g_{k+1}}|_{\tilde\Sigma_k}\equiv 2$, $\tilde A_k\equiv 0$ and $\tilde\lambda_{1,k}=0$. Since $u_k$ is a positive constant on $\Sigma_k$, we also know $R_{\tilde g_k}\equiv 2$, which implies $\Ric_{\bar g_{k+1}}(\tilde\nu_k,\tilde\nu_k)=0$.
\end{proof}

As in \cite[Proposition 5]{BBN2010}, we use the infinitesimal rigidity of $\tilde\Sigma_k$ to construct a foliation of constant mean curvature hypersurfaces around $\tilde\Sigma_k$ in $\bar\Sigma_{k+1}$.

\begin{lemma}\label{Lem: foliation}
If $\tilde\Sigma_k$ is an area-minimizing hypersurface in $\bar\Sigma_{k+1}$ with vanished second fundamental form and normal Ricci curvature, then we can construct a local foliation $\{\tilde\Sigma_{k,t}\}_{-\epsilon\leq t\leq\epsilon}$ in $\bar\Sigma_{k+1}$ such that $\tilde\Sigma_{k,t}$ are of constant mean curvatures and $\tilde\Sigma_{k,0}=\tilde\Sigma_k$. Furthermore, $\tilde\Sigma_{k,t}$ is $T^{n+1-k}$-invariant.
\end{lemma}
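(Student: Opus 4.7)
The plan is to build the foliation by the implicit function theorem applied to the mean curvature operator in a tubular neighborhood of $\tilde\Sigma_k$, then to promote the torus invariance by the uniqueness part of that theorem, exactly as in \cite[Proposition 5]{BBN2010} and \cite[Theorem 1.6]{CCE2016}.

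First, fix a smooth unit normal $\tilde\nu_k$ and parametrize a tubular neighborhood of $\tilde\Sigma_k$ in $\bar\Sigma_{k+1}$ by the normal exponential map $(x,s)\mapsto \exp_x(s\tilde\nu_k(x))$. For a function $w\in C^{2,\alpha}(\tilde\Sigma_k)$ of sufficiently small norm, let $\tilde\Sigma_k(w)$ be the normal graph $\{\exp_x(w(x)\tilde\nu_k(x)):x\in\tilde\Sigma_k\}$, and let $H(w)$ denote its mean curvature computed as a function on $\tilde\Sigma_k$. Define
\begin{equation*}
F:C^{2,\alpha}_0(\tilde\Sigma_k)\times(-\epsilon,\epsilon)\longrightarrow C^{0,\alpha}_0(\tilde\Sigma_k),\qquad F(w,t)=H(w+t)-\fint_{\tilde\Sigma_k}H(w+t)\,\mathrm d\mu_{\tilde g_k},
\end{equation*}
where the subscript $0$ denotes the closed subspace of functions with zero mean. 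The standard linearization of the mean curvature along normal variations gives
\begin{equation*}
D_wF(0,0)\phi=-\Delta_{\tilde g_k}\phi-\bigl(\|\tilde A_k\|^2+\Ric_{\bar g_{k+1}}(\tilde\nu_k,\tilde\nu_k)\bigr)\phi=-\Delta_{\tilde g_k}\phi,
\end{equation*}
since by assumption both terms in the potential vanish. On the mean-zero subspace, $-\Delta_{\tilde g_k}:C^{2,\alpha}_0\to C^{0,\alpha}_0$ is a topological isomorphism.

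By the implicit function theorem, there exist $\epsilon>0$ and a smooth curve $t\mapsto w(t)\in C^{2,\alpha}_0(\tilde\Sigma_k)$ with $w(0)=0$ such that $F(w(t),t)\equiv 0$. Setting $\tilde\Sigma_{k,t}:=\tilde\Sigma_k(w(t)+t)$, each leaf has constant mean curvature $\tilde H_{k,t}=\fint H(w(t)+t)$, and $\tilde\Sigma_{k,0}=\tilde\Sigma_k$. Differentiating at $t=0$, the normal speed of the family is $1+w'(0)$, whose mean is $1$; since $w'(0)$ has zero mean and $\tilde\Sigma_k$ is connected, shrinking $\epsilon$ if necessary we may assume $1+w'(0)>0$ everywhere, so the family is a genuine foliation on a neighborhood of $\tilde\Sigma_k$.

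It remains to verify the $T^{n+1-k}$-invariance. The ambient metric $\bar g_{k+1}$ and the initial surface $\tilde\Sigma_k=\Sigma_k\times T^{n+1-k}$ are both $T^{n+1-k}$-invariant, so the normal exponential map, the unit normal $\tilde\nu_k$ (up to sign, which we fix torus-equivariantly), and hence the operator $w\mapsto F(w,t)$ all intertwine with the torus action. For each $\tau\in T^{n+1-k}$, the function $w(t)\circ\tau^{-1}$ again lies in $C^{2,\alpha}_0$ and solves $F(\,\cdot\,,t)=0$; by the uniqueness clause in the implicit function theorem, $w(t)\circ\tau^{-1}=w(t)$. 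Thus $w(t)$, and therefore each leaf $\tilde\Sigma_{k,t}$, is $T^{n+1-k}$-invariant, as required.
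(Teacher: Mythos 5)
Your approach matches the paper's proof. The paper applies the inverse function theorem to the map $\psi(f)=(\tilde H_f-\fint\tilde H_f,\ \fint f)$ from $C^{2,\alpha}(\tilde\Sigma_k)$ to $\mathring C^\alpha(\tilde\Sigma_k)\times\mathbf R$; you instead split off the mean from the start and apply the implicit function theorem to $F:C^{2,\alpha}_0\times(-\epsilon,\epsilon)\to C^{0,\alpha}_0$ with $F(w,t)=H(w+t)-\fint H(w+t)$. These are the same construction in two equivalent parametrizations, and the use of uniqueness in the (implicit or inverse) function theorem to get $T^{n+1-k}$-invariance is also identical.

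There is, however, a small gap in your argument that the family is actually a foliation. You claim that because $w'(0)$ has zero mean and $\tilde\Sigma_k$ is connected, shrinking $\epsilon$ ensures $1+w'(0)>0$ everywhere; this does not follow, since $w'(0)$ is a fixed function and a mean-zero function can dip below $-1$. What you actually need, and what the paper establishes, is that $w'(0)\equiv 0$. This is immediate from what you have already written: differentiating $F(w(t),t)\equiv 0$ at $t=0$ gives $D_wF(0,0)\,w'(0)+D_tF(0,0)=0$. But $D_tF(0,0)$ is obtained by applying the Jacobi operator to the constant function $1$ and subtracting its mean; since you have shown the Jacobi operator reduces to $-\Delta_{\tilde g_k}$ (the potential terms vanish by hypothesis), constants are annihilated and $D_tF(0,0)=0$. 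Hence $-\Delta_{\tilde g_k}w'(0)=0$, so $w'(0)$ is constant, and being mean-zero it vanishes. Then the normal speed at $t=0$ is $1$, and positivity for $|t|<\epsilon$ follows by continuity after shrinking $\epsilon$, as you intended.
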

\begin{proof}
Consider the map
\begin{equation}
\psi: C^{2,\alpha}(\tilde\Sigma_k)\to \mathring C^{\alpha} (\tilde\Sigma_k)\times\mathbf R,\quad f\mapsto (\tilde H_f-\fint_{\tilde\Sigma_k}\tilde H_f\,\mathrm d\mu,\fint_{\tilde\Sigma_k}f\,\mathrm d\mu),
\end{equation}
where $\tilde H_f$ is the mean curvature of the graph over $\tilde\Sigma_k$ with graph function $f$ and the space
$$\mathring C^\alpha(\tilde \Sigma_k)=\{\phi\in C^\alpha(\tilde\Sigma_k)\,:\,\int_{\tilde\Sigma_k}\phi\,\mathrm d\mu=0\}.$$
Since $\Ric_{\bar g_{k+1}}(\tilde\nu_k,\tilde\nu_k)=0$ and $\tilde A_k=0$, it is clear that the linearized operator of $\psi$ at $f=0$
\begin{equation}
D\psi|_0\,:\,C^{2,\alpha}(\tilde\Sigma_k)\to \mathring C^\alpha(\tilde\Sigma_k)\times \mathbf R,\quad  g\mapsto(-\Delta g,\fint_{\tilde\Sigma_k}g\,\mathrm d\mu)
\end{equation}
is invertible. From the inverse function theorem, we can find a family of function $f_t:\tilde\Sigma_k\to\mathbf R$ with $t\in(-\epsilon,\epsilon)$ with the following properties:
\begin{itemize}
\item The functions $f_t$ satisfies $f_0\equiv 0,$
\begin{equation}\label{Eq: graphs ft}
\left.\frac{\partial}{\partial t}\right|_{t=0} f_t\equiv 1,\quad\text{\rm and}\quad\fint_{\tilde\Sigma_k}f_t\,\mathrm d\mu=t.
\end{equation}
\item The graphs $\tilde\Sigma_{k,t}$ over $\tilde\Sigma_k$ with graph function $f_t$ has constant mean curvature.
\end{itemize}
From (\ref{Eq: graphs ft}), with the value of $\epsilon$ decreased a little bit, the speed $\partial_tf_t$ will be positive everywhere, from which it follows that the graphs $\tilde\Sigma_{k,t}$ form a foliation around $\tilde\Sigma_k$ for $-\epsilon\leq t\leq\epsilon$. The $T^{n+1-k}$-invariance of $\tilde \Sigma_{k,t}$ is due to the uniqueness from the inverse function theorem.
\end{proof}

We are ready to prove the existence of local isometries $\tilde\Phi_k$.
\begin{proposition}\label{Prop: rigidity slicing}
If $\mathcal A_{\mathbf S^2}(M,\bar g)=4\pi$, then for any slicing constructed as in Proposition \ref{Prop: slicing} and any $2\leq k\leq n+1$, we have $R_{\bar g_k}\equiv 2$ and that $u_i|_{\Sigma_k}$ are positive constants for $i$ from $k$ to $n+1$. In addition, there exists a local isometry $\tilde\Phi_k:\tilde\Sigma_k\times\mathbf R\to \bar\Sigma_{k+1}$.
\end{proposition}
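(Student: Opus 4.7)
My plan is to prove the proposition by induction on $k$, starting from the base case $k=2$ supplied by Lemma \ref{Lem: tilde Sigma 2} and propagating up to $k=n+1$. Within the induction, the only substantive task at each step is to construct the local isometry $\tilde\Phi_k$ from the infinitesimal rigidity at level $k$; once $\tilde\Phi_k$ is in hand, a quick topological argument lifts (c) at level $k$ to (a) and (b) at level $k+1$, allowing the induction to continue.

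For the lifting step (c)$_k\Rightarrow$(a), (b)$_{k+1}$: the map $\tilde\Phi_k$ has complete domain $\tilde\Sigma_k\times\mathbf R$ and connected closed target $\bar\Sigma_{k+1}$, so it is a Riemannian covering and in particular surjective. By the inductive hypothesis $u_i|_{\Sigma_k}\equiv c_i$ are constants, so the pulled-back metric $(\tilde\Phi_k)^*\bar g_{k+1}=\tilde g_k+\mathrm dt^2=g_k+\sum_{i=k+1}^{n+1}c_i^2\mathrm dt_{i+1}^2+\mathrm dt^2$ has scalar curvature equal to $R_{g_k}=R_{\bar g_k}=2$. Surjectivity yields $R_{\bar g_{k+1}}\equiv 2$, and matching torus coefficients via the $T^{n+1-k}$-equivariance of $\tilde\Phi_k$ forces $u_i|_{\Sigma_{k+1}}\equiv c_i$ for $i=k+1,\ldots,n+1$.

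The bulk of the work is constructing $\tilde\Phi_k$. Given (a) and (b) at level $k$, Lemma \ref{Lem: tilde Sigma k} makes $\tilde\Sigma_k$ totally geodesic with vanishing normal Ricci, and Lemma \ref{Lem: foliation} produces the CMC foliation $\{\tilde\Sigma_{k,t}\}_{|t|\le\epsilon}$. The crucial step, and the main obstacle, is showing every slice has mean curvature zero. Following Gromov's soap-bubble strategy, suppose for contradiction that $\tilde H_{k,t_0}>0$ for some $0<t_0<\epsilon$; fix $0<\delta<\inf\tilde H_{k,t_0}$ and minimize the brane functional $\mathcal B(\tilde\Sigma)=A(\tilde\Sigma)-\delta V(\tilde\Omega)$ in the homology class of $\tilde\Sigma_k$ among hypersurfaces lying in the slab bounded by $\tilde\Sigma_{k,-\epsilon}$ and $\tilde\Sigma_{k,t_0}$. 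The foliation leaves act as CMC barriers that keep the minimizer in the interior, producing a $T^{n+1-k}$-invariant CMC soap bubble $(\hat\Sigma_k,\hat g_k)$ of mean curvature $\delta$. Taking the first eigenfunction $\hat u_k$ of the brane stability operator and forming $\hat g_{k+1}=\hat g_k+\hat u_k^2\mathrm dt_{k+1}^2$ on $\hat\Sigma_k\times\mathbf S^1$, a computation parallel to (\ref{Eq: Monotonicity Scalar Curvature}) with an additional $\delta^2$ term arising from $H^2$ in the Gauss equation yields $R_{\hat g_{k+1}}\ge 2+\delta^2$. Running the torical symmetrization of Proposition \ref{Prop: slicing} on this new manifold (whose nonzero-degree map to $\mathbf S^2\times T^n$ is inherited through the bubble's homology class) and then applying a strict version of Lemma \ref{Lem: area-bound} with scalar-curvature bound $2+\delta^2$ in place of $2$ produces a non-contractible immersed $2$-sphere in $M$ of area at most $8\pi/(2+\delta^2)<4\pi$, contradicting $\mathcal A_{\mathbf S^2}(M,\bar g)=4\pi$.

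Once $\tilde H_{k,t}\equiv 0$ along the foliation, the first variation shows the leaf volumes are constant in $t$, so every slice is area-minimizing in the class $[\tilde\Sigma_k]$. Lemma \ref{Lem: tilde Sigma k} applied leafwise then forces vanishing second fundamental form and normal Ricci on every leaf, so the foliation coordinates give an isometric product $\tilde\Sigma_k\times(-\epsilon,\epsilon)\hookrightarrow\bar\Sigma_{k+1}$. A continuation argument over the maximal interval $J\ni 0$ on which such a splitting exists shows $J=\mathbf R$: at any hypothetical finite endpoint $b$, the limit leaf $\tilde\Sigma_{k,b}$ remains totally geodesic and area-minimizing with vanishing normal Ricci, so re-running Lemmas \ref{Lem: tilde Sigma k}, \ref{Lem: foliation} together with the soap-bubble argument at $\tilde\Sigma_{k,b}$ extends the splitting past $b$, contradicting maximality. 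This yields $\tilde\Phi_k$ and completes the induction.
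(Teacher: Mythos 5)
Your proposal follows the same overall architecture as the paper: Lemma \ref{Lem: tilde Sigma 2} as base case; Lemmas \ref{Lem: tilde Sigma k} and \ref{Lem: foliation} to get the CMC foliation; the brane/soap-bubble contradiction to force $\tilde H_{k,t}\equiv 0$; the splitting of the pull-back metric along the foliation to get a local isometry on $\tilde\Sigma_k\times(-\epsilon,\epsilon)$; a continuation argument to extend to $\tilde\Sigma_k\times\mathbf R$; and then the covering-map argument to propagate $R_{\bar g_{k+1}}\equiv 2$ and constancy of the warping factors to level $k+1$. This is essentially the paper's proof, and all the main points match, including the strict area bound $8\pi/(2+\delta^2)<4\pi$ from the sharpened version of Lemma \ref{Lem: area-bound}.

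One detail is imprecise and worth correcting. In the brane minimization you take the constraint region to be the slab bounded by $\tilde\Sigma_{k,-\epsilon}$ and $\tilde\Sigma_{k,t_0}$ and assert that ``the foliation leaves act as CMC barriers.'' The outer leaf $\tilde\Sigma_{k,t_0}$ is a barrier because $\tilde H_{k,t_0}>\delta$, but $\tilde\Sigma_{k,-\epsilon}$ is \emph{not} known to be one: at this point in the argument you have no control on the sign of $\tilde H_{k,-\epsilon}$ relative to $\delta$, so the minimizer could touch it and regularity of the $\mathcal B$-minimizer could fail. The paper avoids this by restricting to the region $\tilde\Omega_{k,t_0}$ between $\tilde\Sigma_k$ itself and $\tilde\Sigma_{k,t_0}$ and imposing $\tilde\Sigma_k\subset\partial\tilde\Omega$; since $\tilde\Sigma_k$ has $H=0<\delta$, it is a legitimate one-sided barrier. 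Replacing $\tilde\Sigma_{k,-\epsilon}$ by $\tilde\Sigma_k$ in your slab makes the barrier argument go through as in the paper, and nothing else in your proof needs to change.
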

\begin{proof}
From Lemma \ref{Lem: tilde Sigma 2}, for any slicing constructed as in Proposition \ref{Prop: slicing}, we have $R_{\bar g_2}\equiv 2$ and that $u_i|_{\Sigma_2}$ are positive constants for $i$ from $2$ to $n+1$.
For the rest cases, we are going to apply the induction argument.
Assuming that we already obtain $R_{\bar g_{k}}\equiv 2$ and that $u_i|_{\Sigma_{k}}$ are positive constants for $i$ from $k$ to $n+1$, using Lemma \ref{Lem: tilde Sigma k}, we know that $\tilde\Sigma_{k}$ is totally geodesic and has vanished normal Ricci curvature in $\bar\Sigma_{k+1}$.
From Lemma \ref{Lem: foliation}, there exists a constant mean curvature foliation $\{\tilde\Sigma_{k,t}\}_{-\epsilon\leq t\leq \epsilon}$ in $\bar\Sigma_{k+1}$ with $\tilde\Sigma_{k,0}=\tilde\Sigma_k$.
Denote $\tilde H_{k,t}$ to be the mean curvature of $\tilde \Sigma_{k,t}$, we claim that $\tilde H_{k,t}\equiv 0$ for any $t\in(-\epsilon,\epsilon)$.
It suffices to prove this for nonnegative $t$, since the other side follows from the same argument.
Suppose that the consequence is not true, then there exists a $t_0$ such that $\tilde H_{k,t_0}>0$, since otherwise $\tilde H_{k,t}\leq 0$ and the area-minimizing property of $\tilde\Sigma_t$ will imply $\tilde H_{k,t}\equiv 0$. Denote $\tilde\Omega_{k,t_0}$ to be the region enclosed by $\tilde\Sigma_{k,t_0}$ and $\tilde\Sigma_{k}$, and define the brane functional
\begin{equation}
\mathcal B(\tilde\Omega)=A({\partial\tilde\Omega}\backslash\tilde\Sigma_k)-\delta V(\tilde\Omega),\quad 0<\delta < \tilde H_{k,t_0},
\end{equation}
where $\tilde\Omega$ is any Borel subset of $\tilde\Omega_{k,t_0}$ with finite perimeter and $\tilde\Sigma_k\subset\partial\tilde\Omega$. Due to $0<\delta<\tilde H_{k,t_0}$, the hypersurfaces $\tilde\Sigma$ and $\tilde\Sigma_{k,t_0}$ serve as barriers, therefore we can find a Borel set $\hat\Omega_{k,t_0}$ minimizing the brane functional $\mathcal B$, for which $\hat\Sigma_{k}=\partial\hat\Omega_{k,t_0}\backslash\tilde\Sigma_k$ is a smooth $2$-sided hypersurface disjoint from $\tilde\Sigma_k$ and $\tilde\Sigma_{k,t_0}$.
A similar argument as in Proposition \ref{Prop: slicing} gives $\hat\Sigma_k$ is $T^{n+1-k}$-invariant, so we can write $\hat\Sigma_k$ as $\hat\Sigma_k'\times T^{n+1-k}$, where $\hat\Sigma_k'$ is a hypersurface in $(\Sigma_{k+1},g_{k+1})$. Denote the induced metric of $\hat\Sigma_k'$ by $\hat g_k'$, then the induced metric of $\hat\Sigma_k$ is
\begin{equation*}
\hat g_k=\hat g_k'+\sum_{i=k+1}^{n+1}(u_i|_{\hat\Sigma_k'})^2\mathrm dt_{i+1}^2.
\end{equation*}
Since $\hat\Sigma_{k}$ is $\mathcal B$-minimizing, it has constant mean curvature $\delta$ and it is $\mathcal B$-stable. So the Jacobi operator
\begin{equation}
\hat{\mathcal J}_k=-\Delta_{\hat g_k}-(\Ric_{\bar g_{k+1}}(\hat\nu_k,\hat\nu_k)+\|\hat A_k\|^2)=-\Delta_{\hat g_k}-\frac{1}{2}(R_{\bar g_{k+1}}-R_{\hat g_k}+\delta^2+\|\hat A_k\|^2)
\end{equation}
is nonnegative, where $\hat\nu_k$ is the unit normal vector field of $\hat\Sigma_k$ and $\hat A_k$ is the corresponding second fundamental form. Taking the first eigenfunction $\hat u_{k}$ of $\hat{\mathcal J}_k$ with respect to the first eigenvalue $\hat\lambda_{1,k}\geq 0$ and defining the following metric
\begin{equation}
\hat g_{k+1}=\hat g_k+\hat u_k^2\mathrm dt_{k+1}^2,
\end{equation}
on $\hat\Sigma_{k}\times \mathbf S^1$, we obtain
$$
R_{\hat g_{k+1}}\geq R_{\bar g_{k+1}}+\delta^2+\|\hat A_k\|^2+2\hat\lambda_{1,k}\geq 2+\delta^2.
$$
Since $\hat\Sigma_k$ is homologic to $\tilde\Sigma_k$, the map $\hat F_k=\pi^n_{n-1}\circ F_{k+1}|_{\hat\Sigma_k}$ has nonzero degree. If $\hat\Sigma_k$ is not connected, we may choose a suitable component as $\hat\Sigma_k$ such that $\hat F_k$ has non-zero degree. As a result, the map $\hat f_k=\pi^n_{k-2}\circ f|_{\hat\Sigma_k'}$ has nonzero degree. Repeating what we have done as in Proposition \ref{Prop: slicing}, we can find a slicing
\begin{equation*}
(\hat\Sigma_2',\hat g_2')\looparrowright(\hat\Sigma_3',\hat g_3')\looparrowright\cdots\looparrowright (\hat\Sigma_{k}',\hat g_k')\looparrowright (\Sigma_{k+1},g_{k+1})\looparrowright\cdots\looparrowright (\Sigma_{n+2},g_{n+2})=(M,\bar g)
\end{equation*}
and functions $\hat u_i\,:\,\hat\Sigma_i'\to\mathbf R$, $i=2,3,\ldots,k$, such that the scalar curvature $R_{\hat g_2}\geq 2+\delta^2$ for
\begin{equation*}
\hat g_2=\hat g_2'+\sum_{i=2}^k (\hat u_i|_{\hat\Sigma_2'})^2\mathrm dt_{i+1}^2+\sum_{i=k+1}^{n+1}(u_i|_{\hat\Sigma_2'})^2\mathrm dt_{i+1}^2.
\end{equation*}
Also, we have $\deg \pi^n_0\circ f|_{\hat\Sigma_2}\neq 0$. Therefore, $\hat\Sigma_2$ is a non-contractible embedded sphere in $M$. It follows from Lemma \ref{Lem: area-bound} that $A(\hat\Sigma_2)\leq 4\pi(1+\delta^2/2)^{-1}<4\pi$, which leads to a contradiction.

Since $\tilde H_{k,t}$ vanishes for any $t\in(-\epsilon,\epsilon)$, there holds $A(\tilde\Sigma_{k,t})\equiv A(\tilde\Sigma_k)$, which yields that $\tilde\Sigma_{k,t}$ is also area-minimizing.
Replacing $\tilde\Sigma_k$ by $\tilde\Sigma_{k,t}$, we can also obtain a slicing as in Proposition \ref{Prop: slicing}.
Combined with the induction hypothesis and Lemma \ref{Lem: tilde Sigma k}, $\tilde \Sigma_{k,t}$ is totally geodesic and has vanished normal Ricci curvature.

In the following, we show that there exists a local isometry $\tilde\Phi_k:\tilde\Sigma_k\to\bar\Sigma_{k+1}$. Denote $\tilde V_k$ to be the normal variation vector field of the foliation $\tilde\Sigma_{k,t}$ and $\tilde\Phi_{k}:\tilde\Sigma_k\times (-\epsilon,\epsilon)\to\bar\Sigma_{k+1}$ to be the flow generated by $\tilde V_k$.
Notice that $\tilde\Sigma_{k,t}$ are $T^{n+1-k}$-invariant, $\tilde\Sigma_{k,t}$ can be written as $\Sigma_{k,t}\times T^{n+1-k}$ and $\tilde V_k$ can be viewed as the normal variation vector field $V_k$ of $\Sigma_{k,t}$ on $\Sigma_{k+1}$.
Let $\Phi_k:\Sigma_k\times(-\epsilon,\epsilon)\to\Sigma_{k+1}$ be the flow generated by $V_k$, then $\tilde\Phi_k=(\Phi_k,id)$.
It is clear that $\tilde\Phi_k$ is an embedding around $\tilde\Sigma_k$ and the pull-back of the metric $\bar g_{k+1}$ is
\begin{equation*}
\tilde\Phi_k^*(\bar g_{k+1})=\phi^2\mathrm dt^2+\tilde\Phi_{k,t}^*(\tilde g_{k,t})
=\phi^2\mathrm dt^2+\Phi_{k,t}^*(g_{k,t})+\sum_{i=k+1}^{n+1}(u_i|_{\Sigma_{k,t}})^2\mathrm dt_{i+1}^2,
\end{equation*}
where $\phi>0$ is the lapse function, $\tilde g_{k,t}$ and $g_{k,t}$ are the induced metrics of $\tilde\Sigma_{k,t}$ and $\Sigma_{k,t}$, respectively.
Since $\tilde\Sigma_{k,t}$ is totally geodesic, we have $\partial_t\tilde\Phi^*(\tilde g_{k,t})=2\phi \tilde h_{k,t}=0$, which yields $\tilde\Phi^*(\tilde g_{k,t})\equiv \tilde g_k$ and that $u_i|_{\Sigma_{k,t}}=u_i|_{\Sigma_k}$ are positive constants for $i$ from $k+1$ to $n+1$.
Also, from stability we have $\tilde{\mathcal J}\phi=-\Delta \phi=0$, which gives $\phi(\cdot,t)\equiv const$. Let
$$r(t)=\int_0^t\phi(\cdot,s)\,\mathrm ds,$$
then $\tilde\Phi_k^*(\bar g_{k+1})=\mathrm dr^2+\tilde g_k$. Therefore, $\tilde\Phi_k:\tilde\Sigma_k\times (-\epsilon,\epsilon)\to\bar\Sigma_{k+1}$ is a local isometry. Through a continuous argument as in \cite[Proposition 11]{BBN2010}, we conclude that there exists a local isometry $\tilde\Phi_k:\tilde\Sigma_k\times\mathbf R\to\bar \Sigma_{k+1}$. As a result, we obtain $R_{\bar g_{k+1}}=R_{\tilde g_k}=2$ and that $u_i|_{\Sigma_{k+1}}$ are positive constants for $i$ from $k+1$ to $n+1$. Now, the proposition follows easily from the induction argument.
\end{proof}

From the proof above, we have the following corollary:

\begin{corollary}\label{Cor: rigidity slicing}
If $\mathcal A_{\mathbf S^2}(M,\bar g)=4\pi$, then for any slicing constructed as in Proposition \ref{Prop: slicing}, there exist local isometries $\Phi_k:\Sigma_k\times \mathbf R\to \Sigma_{k+1}$ for $k=2,3,\ldots,n+1$.
\end{corollary}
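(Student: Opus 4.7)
The plan is to extract the corollary directly from the proof of Proposition \ref{Prop: rigidity slicing}, which already contains essentially all the ingredients; the remaining work is to carefully peel off the $T^{n+1-k}$-factor from the local isometries $\tilde\Phi_k$.

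First I would invoke Proposition \ref{Prop: rigidity slicing} to obtain, for each $2\le k\le n+1$, a local isometry $\tilde\Phi_k:\tilde\Sigma_k\times\mathbf R\to\bar\Sigma_{k+1}$. Recall that $\tilde\Sigma_k=\Sigma_k\times T^{n+1-k}$ and $\bar\Sigma_{k+1}=\Sigma_{k+1}\times T^{n+1-k}$, equipped with warped product metrics, and that by Proposition \ref{Prop: rigidity slicing} the warping functions $u_i|_{\Sigma_{k+1}}$ (and, as noted in the proof, $u_i|_{\Sigma_{k,t}}$ along the foliation) are positive constants for $i\geq k+1$. In the construction of $\tilde\Phi_k$ inside the proof of Proposition \ref{Prop: rigidity slicing}, the foliation $\tilde\Sigma_{k,t}=\Sigma_{k,t}\times T^{n+1-k}$ is $T^{n+1-k}$-invariant, so its unit normal $\tilde\nu_{k,t}$ and lapse function $\phi$ are also $T^{n+1-k}$-invariant; hence the normal variation vector field $\tilde V_k=\phi\,\tilde\nu_{k,t}$ descends to a vector field $V_k$ on (an open neighborhood of $\Sigma_k$ in) $\Sigma_{k+1}$.

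Next I would let $\Phi_k:\Sigma_k\times(-\epsilon,\epsilon)\to\Sigma_{k+1}$ denote the flow of $V_k$, so that $\tilde\Phi_k=(\Phi_k,\mathrm{id})$ as in the proof of Proposition \ref{Prop: rigidity slicing}. Pulling back the formula
\begin{equation*}
\tilde\Phi_k^*(\bar g_{k+1})=\mathrm dr^2+\tilde g_k=\mathrm dr^2+g_k+\sum_{i=k+1}^{n+1}(u_i|_{\Sigma_k})^2\mathrm dt_{i+1}^2
\end{equation*}
and comparing with
\begin{equation*}
\bar g_{k+1}=g_{k+1}+\sum_{i=k+1}^{n+1}(u_i|_{\Sigma_{k+1}})^2\mathrm dt_{i+1}^2,
\end{equation*}
the torus directions cancel (the warping constants agree by Proposition \ref{Prop: rigidity slicing}), leaving $\Phi_k^*(g_{k+1})=\mathrm dr^2+g_k$. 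Thus $\Phi_k$ is a local isometry on a tubular neighborhood of $\Sigma_k$.

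Finally I would extend $\Phi_k$ to all of $\Sigma_k\times\mathbf R$ by the same continuity argument used in \cite[Proposition 11]{BBN2010} and invoked in Proposition \ref{Prop: rigidity slicing}: applying the local construction at each slice $\Sigma_{k,t}$, which is itself area-minimizing, totally geodesic and has vanishing normal Ricci curvature, we can patch together a global local isometry $\Phi_k:\Sigma_k\times\mathbf R\to\Sigma_{k+1}$. No genuinely new difficulty arises; the only subtlety is verifying that the $T^{n+1-k}$-invariance propagates through the continuation step, but this is immediate from the uniqueness in the inverse function theorem used in Lemma \ref{Lem: foliation}. I do not expect any serious obstacle here, since all the analytic work has already been done in Proposition \ref{Prop: rigidity slicing}.
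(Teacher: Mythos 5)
Your proposal is correct and takes essentially the same approach as the paper: the paper states Corollary \ref{Cor: rigidity slicing} as an immediate byproduct of the proof of Proposition \ref{Prop: rigidity slicing}, which already constructs $\tilde\Phi_k=(\Phi_k,\mathrm{id})$ and notes that the warping factors are constant, so that the $T^{n+1-k}$-directions cancel exactly as you describe. Your only looseness is phrasing --- the area-minimizing/totally geodesic/vanishing normal Ricci properties used in the continuation are established for $\tilde\Sigma_{k,t}$ in $\bar\Sigma_{k+1}$ rather than for $\Sigma_{k,t}$ in $\Sigma_{k+1}$, so the continuation should be run at the level of $\tilde\Phi_k$ and then descended via $T^{n+1-k}$-invariance, as you in fact indicate at the end.
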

We now prove the main theorem.
\begin{proof}[Proof of Theorem \ref{Thm: main1}]
From Corollary \ref{Cor: area-bound}, we have $\mathcal A_{\mathbf S^2}(M,\bar g)\leq 4\pi$. If the equality holds,
from Proposition \ref{Prop: slicing}, Lemma \ref{Lem: tilde Sigma 2} and Corollary \ref{Cor: rigidity slicing}, we can find a slicing
\begin{equation*}
(\Sigma_2,g_2)\looparrowright(\Sigma_3,g_3)\looparrowright\cdots\looparrowright (\Sigma_{n+1},g_{n+1}) \looparrowright (\Sigma_{n+2},g_{n+2})=(M,\bar g)
\end{equation*}
such that $(\Sigma_2,g_2)$ is isometric to the standard sphere and that there exist local isometries $\Phi_k:\Sigma_k\to \Sigma_{k+1}$ for $k=2,3,\ldots,n+1$.
Denote $\Phi$ to be the composition of the following maps
\begin{equation}
\mathbf S^2\times \mathbf R^n=\Sigma_2\times\mathbf R^n\stackrel{(\Phi_2,id)}\longrightarrow\Sigma_3\times\mathbf R^{n-1}\stackrel{(\Phi_3,id)}\longrightarrow\cdots\stackrel{\Phi_{n+1}}\longrightarrow \Sigma_{n+2}=M,
\end{equation}
then $\Phi$ is a local isometry from $\mathbf S^2\times \mathbf R^n$ to $M$. Since $\mathbf S^2\times \mathbf R^n$ is complete, $\Phi$ is a covering map. It is clear that $\mathbf S^2\times \mathbf R^n$ is the universal covering of $(M,\bar g)$.
\end{proof}

\bibliography{RigiditySphere}
\bibliographystyle{amsplain}

\end{document}